\DeclareMathAlphabet{\pazocal}{OMS}{zplm}{m}{n}
\numberwithin{equation}{section}
\newcommand{\Di}{\slashed{\Dd}}
\newcommand{\tmu}{{\mu_\sharp}}
\newcommand{\III}{{\pazocal{I}}}
\newcommand{\QQ}{{\pazocal{Q}}}
\newcommand{\QQQ}{{\mathscr{Q}}}
\newcommand{\VVV}{{\mathscr{V}}}
\newcommand{\KKK}{{\mathscr{K}}}
\newcommand{\R}{\mathbb{R}}
\newcommand{\N}{\mathbb{N}}
\newcommand{\NN}{\mathcal{N}}
\newcommand{\p}{{\partial}}
\newcommand{\dd}[2]{\dfrac{\partial #1}{\partial #2}}
\newcommand{\matrice}[1]{\left[ \begin{matrix}
#1
\end{matrix} \right]}
\newcommand{\Det}{{\operatorname{Det}}}
\newcommand{\PP}{{\pazocal{P}}}
\newcommand{\PPP}{{\mathscr{P}}}
\newcommand{\loc}{{\text{loc}}}
\newcommand{\epsi}{\varepsilon}
\newcommand{\sgn}{{\operatorname{sgn}}}
\newcommand{\trace}{{\operatorname{Tr}}} 
\newcommand{\Tr}{{\operatorname{Tr}}}
\newcommand{\te}{\theta}
\newcommand{\lr}[1]{\langle #1 \rangle}
\newcommand{\blr}[1]{\left\langle #1 \right\rangle}
\newcommand{\GG}{\pazocal{G}}
\newcommand{\Bb}{\mathbb{B}}
\newcommand{\Z}{\mathbb{Z}}
\newcommand{\Dd}{\mathbb{D}}
\newcommand{\Ii}{\mathbb{I}}
\newcommand{\Jj}{\mathbb{J}}
\newcommand{\Ll}{\mathbb{L}}
\newcommand{\Pp}{\mathbb{P}}
\newcommand{\Ww}{\mathbb{W}}
\newcommand{\Id}{{\operatorname{Id}}}
\newcommand{\BB}{\mathbbm{b}}
\newcommand{\CCC}{\pazocal{C}}
\newcommand{\II}{\pazocal{I}}
\newcommand{\MM}{\pazocal{M}}
\newcommand{\UU}{{\pazocal{U}}}
\newcommand{\KK}{{\pazocal{K}}}
\newcommand{\HH}{\pazocal{H}}
\newcommand{\SSS}{{\pazocal{S}}}
\newcommand{\ove}[1]{{\overline{#1}}}
\newcommand{\systeme}[1]{\left\{ \begin{matrix} #1 \end{matrix} \right.}
\newcommand{\C}{\mathbb{C}}
\newcommand{\az}{\alpha}
\newcommand{\Vv}{\mathbb{V}}
\newcommand{\OO}{{\mathscr{O}}}
\newcommand{\dist}{{\operatorname{dist}}}
\newcommand{\Sf}{{\operatorname{Sf}}}
\newcommand{\ess}{{\operatorname{ess}}}
\newcommand{\Mm}{\mathbb{M}}
\newcommand{\Gg}{\mathbb{G}}
\newcommand{\Aa}{\mathbb{A}}
\newcommand{\Tt}{{\mathbb{T}}}
\newcommand{\Ee}{{\mathbb{E}}}
\newcommand{\RR}{\pazocal{R}}
\newcommand{\de}{ \ \mathrel{\stackrel{\makebox[0pt]{\mbox{\normalfont\tiny def}}}{=}} \ }
\title[The bulk-edge correspondence for continuous honeycomb lattices]{The bulk-edge correspondence for continuous honeycomb lattices}
\author{Alexis Drouot}
\newtheorem{thm}{Theorem}
\newtheorem{defi}{Definition}
\newtheorem{lem}{Lemma}[section]
\newtheorem{theorem}[thm]{Theorem}
\theoremstyle{definition}
\newtheorem{rmk}{Remark}[section]
\begin{document}
\vspace*{-1cm}
\maketitle
\thispagestyle{empty}

\newcommand{\tB}{\widetilde{B}}
\newcommand{\tlambda}{\widetilde{\lambda}}

\vspace*{-9mm}

\begin{abstract} We study bulk/edge aspects of continuous honeycomb lattices in a magnetic field. We compute the bulk index of Bloch eigenbundles: it equals $2$ or $-2$, with sign depending on nearby Dirac points and on the magnetic field. We then prove the existence of two topologically protected unidirectional waves propagating along line defects. This shows the  bulk/edge correspondence for our class of Hamiltonians.
\end{abstract}

\section{Introduction}

This note focuses on bulk/edge aspects of \textit{continuous, asymptotically periodic} Hamiltonians $\Pp_\delta$. These operators model electronic transport between honeycomb lattices, when a magnetic field breaks time-reversal symmetry. Related models have suggested an analogy between photonic structures and topological insulators \cite{HR,RH}. 

In an asymptotic regime, \cite{FLTW3,LWZ} mathematically constructed edge states bifurcating from Dirac points energies. Here, we relate their existence to a non-zero bulk invariant. This demonstrates their persistence outside the perturbative regime.

\subsection{Bulk index}\label{sec:1.1} The bulk operators associated to $\Pp_\delta$ are
\begin{equation}\label{eq:2c}
\Pp_{\delta,+} \de -\Delta_{\R^2} + \Vv + \delta \cdot \Ww, \ \ \ \  \ \Pp_{\delta,-} \de -\Delta_{\R^2} - \Vv - \delta \cdot \Ww, \ \ \  \text{ where: }
\end{equation} 
\begin{itemize}
\item $\Vv \in C^\infty\big(\R^2,\R\big)$ is even, periodic with respect to the equilateral lattice $\Lambda$ and invariant under the $2\pi/3$-rotation $R$ -- see \S\ref{sec:2.1}.
\item $\Ww = \frac{1}{i} \big( \Aa \cdot \nabla + \nabla \cdot \Aa \big)$  and $\Aa \in C^\infty\big(\R^2,\R^2\big)$ is odd and periodic w.r.t. $\Lambda$.
\end{itemize} 

 Our first result computes the bulk index of  $\Pp_{\delta,\pm}$. Let $\lambda_{\delta,1}(\xi) \leq \dots \leq \lambda_{\delta,j}(\xi) \leq \dots$ be the eigenvalues of $\Pp_{\delta,+}$ on Floquet spaces $L^2_\xi$ -- see \S\ref{sec:2.1}. Generically, $\Pp_{0,+}$ admits Dirac points $(\xi_\star,E_\star)$ and $(-\xi_\star,E_\star)$ -- see  \cite{FW}. These come with pairs $(\phi_1,\phi_2) \in L^2_{\xi_\star}$ with
\begin{equation}
\ker_{L^2_{\xi_\star}}(\Pp_{0,+}-E_\star) = \C\phi_1 \oplus \C\phi_2,  \ \ \ \ 
\phi_1(Rx) = e^{2i\pi/3} \phi_1(x), \ \ \ \ \phi_2(Rx) = e^{-2i\pi/3} \phi_2(x).
\end{equation}
We write $E_\star = \lambda_{0,n}(\xi_\star)=\lambda_{0,n+1}(\xi_\star)$ and we assume:
\begin{equations}\label{eq:2a}
\lambda_{0,n}(\xi) = \lambda_{0, n+1}(\xi) \ \ \Leftrightarrow \ \ \xi \in \big\{\xi_\star,-\xi_\star\big\} \ \ \text{modulo the dual lattice $2\pi \Lambda^*$};
\end{equations}
\begin{equations}\label{eq:1a}
\hspace{-2mm} \te_\star \de \lr{\phi_1, \Ww \phi_1}_{L^2_{\xi_\star}} \neq 0; \ \ \ \text{and} \  \ \  \inf \big\{\delta > 0 : \exists \xi \in \R^2, \  \lambda_{\delta,n}(\xi) = \lambda_{\delta,n+1}(\xi) \big\} > 0.
\end{equations}
Let $\delta_\sharp$ be the infimum in \eqref{eq:1a}.
This assumption allows to construct a smooth bundle $\Ee_{\delta,\pm}$ over the torus $\R^2/(2\pi\Lambda^*)$ when $\delta \in (0,\delta_\sharp)$: the fiber at $\xi$ is the $L^2_\xi$-eigenspace of $\Pp_{\delta,+}$ corresponding to $\lambda_{\delta,1}(\xi), \dots,$ $\lambda_{\delta,n}(\xi)$. Following the physics literature, the bulk index of $\Pp_{\delta,\pm}$ is the first Chern class $c_1(\Ee_{\delta,\pm})$ -- see \S\ref{sec:2.2a}. We similarly define $c_1(\Ee_{\delta,-})$.

\begin{theorem}\label{thm:1} Assume that \eqref{eq:2a} and \eqref{eq:1a} hold. Then for every $\delta \in (0,\delta_\sharp)$,
\begin{equation}
c_1(\Ee_{\delta,+}) = - \sgn(\te_\star) \  \text{ and  } \ c_1(\Ee_{\delta,-}) = \sgn(\te_\star).
\end{equation}
\end{theorem}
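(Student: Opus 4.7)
The plan is to show $c_1(\Ee_{\delta,+})$ is constant on $(0,\delta_\sharp)$ by a homotopy argument and then compute it in the limit $\delta \to 0^+$, where the Berry curvature concentrates at $\pm\xi_\star$ and each Dirac point contributes via a local massive Dirac Hamiltonian.

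By \eqref{eq:1a} the gap between $\lambda_{\delta,n}$ and $\lambda_{\delta,n+1}$ is uniformly positive for $\delta \in (0,\delta_\sharp)$, and analytic perturbation theory makes $\Ee_{\delta,+}$ smooth in $\delta$ on this interval. Hence $c_1(\Ee_{\delta,+}) \in \Z$ is constant in $\delta$, and it suffices to evaluate it as $\delta \to 0^+$. Write $c_1(\Ee_{\delta,+}) = (2\pi)^{-1}\int_{\R^2/(2\pi\Lambda^*)}\Omega_\delta(\xi)\,d\xi$, with $\Omega_\delta$ the trace of the Berry curvature of $\Ee_{\delta,+}$. Fix small disks $D_\pm$ around $\pm\xi_\star$. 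On the complement $\Omega_\delta$ converges smoothly to $\Omega_0$, the Berry curvature of $\Ee_{0,+}$ away from the Dirac points; time-reversal invariance of $\Pp_{0,+}$ (the magnetic term is switched off and $\Vv$ is real) forces $\Omega_0(-\xi) = -\Omega_0(\xi)$, so the integral of $\Omega_0$ over the complement vanishes and $c_1(\Ee_{\delta,+})$ is captured entirely by $D_+ \cup D_-$.

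The core step is a Lyapunov--Schmidt reduction at each Dirac point. Near $\xi_\star$, project $\Pp_{\delta,+} - E_\star$ onto $\spane(\phi_1,\phi_2) \subset L^2_{\xi_\star}$. The $R$-symmetry kills the off-diagonal matrix elements $\lr{\phi_1,\Ww\phi_2}_{L^2_{\xi_\star}}$, and a short antiunitary-symmetry argument (parity composed with complex conjugation commutes with $\Pp_{0,+}$ and anticommutes with $\Ww$) yields $\lr{\phi_2,\Ww\phi_2}_{L^2_{\xi_\star}} = -\te_\star$. Combining this with the conical dispersion of $\Pp_{0,+}$ from \cite{FW} produces a reduced $2\times 2$ Hamiltonian of the form
\[
H^+_{\rm eff}(k,\delta) = v_F(k_1 \sigma_1 + k_2 \sigma_2) + \delta\te_\star \sigma_3 + o(|k|+\delta), \qquad k = \xi - \xi_\star,
\]
up to a fixed unitary change of basis, with $v_F > 0$ the Fermi velocity. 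A parallel reduction at $-\xi_\star$ in the basis obtained by complex-conjugating $(\phi_2,\phi_1)$ (swapped to keep the convention on $R$-eigenvalues) yields the same diagonal mass $\delta\te_\star\sigma_3$ but opposite chirality in the kinetic term. The standard formula for the Chern flux of a massive 2D Dirac cone then produces two half-integer contributions that add constructively to $-\sgn(\te_\star)$, giving $c_1(\Ee_{\delta,+}) = -\sgn(\te_\star)$. The result for $\Ee_{\delta,-}$ follows by the same argument applied to $\Pp_{\delta,-}$: the opposite sign in front of $\Ww$ reverses each effective mass term, negating the Chern number.

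The main obstacle is the careful bookkeeping of signs: one must simultaneously track (i) the chirality at each valley, (ii) the orientation induced by the identification of Bloch bases at $-\xi_\star$ via the antiunitary symmetry, and (iii) the sign convention in the $\pm\tfrac{1}{2}\sgn(m)$ formula for the Chern flux of a gapped Dirac cone. Integrality of the final answer serves as a useful sanity check, ruling out uncancelled half-integer contributions.
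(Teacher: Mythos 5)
Your overall strategy matches the paper's: use topological invariance of $c_1$ to reduce to $\delta\to 0$, kill the Berry curvature away from the Dirac momenta by symmetry, and compute the contribution of each Dirac momentum via an effective two-band Hamiltonian. Your weaker observation that the integral of $\Omega_0$ over the complement vanishes (oddness under $\xi\mapsto-\xi$) is a legitimate shortcut; the paper proves the stronger pointwise statement $\Omega_0\equiv 0$ using both spatial inversion and complex conjugation.

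However, the sign analysis at the second Dirac point contains a genuine error. You claim that in the basis $(\phi_1^B,\phi_2^B)=(\ove{\phi_2},\ove{\phi_1})$ at $-\xi_\star$ one gets ``the same diagonal mass $\delta\te_\star\sigma_3$ but opposite chirality,'' and that the two half-fluxes then ``add constructively.'' Those two assertions are mutually inconsistent: for a massive $2\times 2$ Dirac cone the flux is $\pm\tfrac12\sgn(m)$ with a sign determined by the orientation (chirality) of the kinetic term, so same mass and opposite chirality yields cancellation, $c_1=0$, not $-\sgn(\te_\star)$. The correct statement is that the chirality is the \emph{same} at both valleys. Indeed, using that $\CCC$ anticommutes with $D_x$ and that $\eta\cdot D_x$ is self-adjoint, one computes $\nu_\star^B\eta \de 2\lr{\ove{\phi_2},(\eta\cdot D_x)\ove{\phi_1}} = -\nu_\star\eta$. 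Multiplication by the nonzero complex constant $-\nu_\star$ is still $\C$-linear and thus orientation-preserving in the same sense as multiplication by $\nu_\star$; the overall $-1$ is a unitary gauge phase (absorb it by $\phi_2^B\to-\phi_2^B$, or equivalently by the orientation-preserving affine reparametrization $\xi\mapsto\Phi_\delta^B(\xi)$ that the paper uses in Lemma \ref{lem:1e}). Combined with $\te_\star^B=\te_\star$, which you derived correctly, each valley contributes $-\tfrac12\sgn(\te_\star)$ with the \emph{same} sign, giving the total $-\sgn(\te_\star)$. Your configuration ``same mass, opposite chirality'' corresponds to a time-reversal-invariant model and is precisely the case the paper contrasts with in Remark \ref{rmk:1}.

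A secondary remark: the step from the exact projector to the two-band model near $\xi_\star$ is not a uniform approximation; the paper establishes $\Bb_\delta(\xi)=\BB_\delta(\xi)+O(r_\delta(\xi)^{-1})$ (Lemma \ref{lem:1c}) and then controls the integral of the error via the integrability of $r_\delta(\xi)^{-1}$ and integrality of $c_1$. Your Lyapunov--Schmidt sketch elides this quantitative error control, which is essential because the leading Berry curvature itself is of size $\delta^{-2}$ near the Dirac point.
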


\subsection{Edge index and the bulk-edge correspondence}\label{sec:1.2} Let $v \in \Lambda$ -- representing the direction of an edge. The operator  $\Pp_\delta$ considered in \cite{Dr0} is
\begin{equation}
\Pp_\delta \de -\Delta_{\R^2} + \Vv + \delta \cdot \kappa_\delta \cdot \Ww.
\end{equation}
Above, $\kappa_\delta$ is a domain wall across $\R v$: there exists  $\kappa \in C^\infty(\R,\R)$ equal to $\pm 1$ near $\pm \infty$ such that $\kappa_\delta(x) = \kappa(\delta \lr{k',x})$, where $k' \in \Lambda^*$ is dual to $v$ -- see \S\ref{sec:3.1}. See \cite{HR,LWZ} for related models. The operators $\Pp_{\delta,\pm}$ in \eqref{eq:2c} are the limits of $\Pp_\delta$ as $\lr{k',x} \rightarrow \pm\infty$.

The operator $\Pp_\delta$ is not a periodic operator with respect to $\Lambda$. It is however periodic with respect to $\Z v$. For $\zeta \in \R$, let $\Pp_\delta[\zeta]$
 be the operator equal to $\Pp_\delta$, but acting on
\begin{equation}
L^2[\zeta] \de \left\{ u \in L^2_\loc\big(\R^2,\C\big), \ u(x+v) = e^{i\zeta} u(x),  \ \ \int_{\R^2/\Z v} |u(x)|^2 dx   < \infty\right\}.
\end{equation}
Fix $\delta_\flat \in (0,\delta_\sharp)$ and assume that there exists $E_\flat \in C^\infty\big(\R/(2\pi\Z), \R\big)$ with
\begin{equation}\label{eq:2b}
\forall \zeta, \tau, \tau' \in [0,2\pi], \ \ \ \ \lambda_{\delta_\flat,n}(\zeta k + \tau k') < E_\flat(\zeta) <   \lambda_{\delta_\flat,n+1}(\zeta k + \tau' k').
\end{equation}
Then for every $\zeta \in \R$, $E_\flat(\zeta)$ is not in the essential spectrum of $\Pp_{\delta_\flat}[\zeta]$. This allows to define the edge index $\NN$ of $\Pp_{\delta_\flat}$ as the spectral flow of $\Pp_{\delta_\flat}-E_\flat$. It is the signed number of eigenvalues of 
$\Pp_{\delta_\flat}[\zeta]$ that cross the gap containing $E_\flat(\zeta)$ downwards as $\zeta$ sweeps $[0,2\pi]$. See \cite{Wa} for an introduction to spectral flow.

\begin{thm}\label{thm:2} Assume that \eqref{eq:2a}, \eqref{eq:1a} and \eqref{eq:2b} hold, and that $\lr{\xi_\star,v} \notin \pi \Z$. Then
\begin{equation}\label{eq:2d}
\NN = c_1(\Ee_{\delta_\flat,+}) - c_1(\Ee_{\delta_\flat,-}) = -2 \cdot \sgn(\te_\star).
\end{equation}
\end{thm}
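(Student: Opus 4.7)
The second equality in \eqref{eq:2d} follows by applying Theorem~\ref{thm:1} separately to $\Pp_{\delta_\flat,+}$ and $\Pp_{\delta_\flat,-}$, so the real task is the bulk--edge identity $\NN=c_1(\Ee_{\delta_\flat,+})-c_1(\Ee_{\delta_\flat,-})$. The plan is to express both sides as Fredholm indices of spectral projectors built from $\Pp_{\delta_\flat}$, and to match them via an asymptotic bulk analysis across the wall. For the left-hand side, $\Pp_{\delta_\flat}$ commutes with translation $U_v$ by $v$, and Bloch--Floquet decomposition along $v$ yields the fibers $\Pp_{\delta_\flat}[\zeta]$; assumption~\eqref{eq:2b} provides a smooth gap function $E_\flat(\zeta)$ avoiding the essential spectrum of each fiber. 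The fiberwise spectral projector $P[\zeta]$ below $E_\flat(\zeta)$ is then norm-continuous modulo finite rank, and a Phillips-type spectral-flow identity rewrites $\NN$ as the Fredholm index $\Ind(PU_v P|_{\mathrm{Ran}\,P})$, where $P$ is a global projector on $L^2(\R^2)$ assembled from the family $\{P[\zeta]\}$.

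For the right-hand side, I would exploit the structure of the wall $\kappa_\delta(x)=\kappa(\delta_\flat\lr{k',x})$: it stabilizes to $\pm 1$ as $\lr{k',x}\to\pm\infty$, so Combes--Thomas estimates, which apply because $E_\flat(\zeta)$ lies in a gap of $\Pp_{\delta_\flat,\pm}$, yield exponential decay of $(\Pp_{\delta_\flat}-z)^{-1}-(\Pp_{\delta_\flat,\pm}-z)^{-1}$ in the respective half-planes $\pm\lr{k',x}>0$. This forces $P-\chi_+\Pi_+-\chi_-\Pi_-$ to decay away from the wall, where $\Pi_\pm$ are the bulk spectral projectors of $\Pp_{\delta_\flat,\pm}$ below the common gap and $\chi_\pm$ are smooth half-space cutoffs. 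Plugging this decomposition into $\Ind(PU_v P)$, and observing that the interface correction is trace-class with vanishing net signed contribution, splits the Fredholm index into two half-plane pieces. A standard Toeplitz-to-Chern identification then evaluates each half-plane index as $\pm c_1(\Ee_{\delta_\flat,\pm})$, the signs reflecting the outward orientations of the two half-planes; summing yields \eqref{eq:2d}.

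The principal obstacle lies in the matching step: $\Pp_{\delta_\flat}$ is only periodic in $v$, not in the transverse direction $k'$, so I cannot reduce directly to a 2D Bloch problem and must control $P$ purely through spatial decay estimates combined with fiberwise Bloch--Floquet data. The hypothesis $\lr{\xi_\star,v}\notin\pi\Z$ enters precisely here: it forces the images of $\pm\xi_\star$ under $\xi\mapsto\lr{\xi,v}$ on the edge circle to stay away from inversion-invariant momenta, so that \eqref{eq:2b} can be upgraded to a uniform-in-$\zeta$ gap and $P$ can be defined globally from a single $\zeta$-independent Riesz contour. A secondary difficulty is verifying the trace-class nature of the interface correction uniformly in $\zeta$, which requires combining Combes--Thomas decay with Bloch--Floquet estimates on each bulk side; once these delicate but standard bounds are in place, the bulk--edge identity follows from the index reformulation and the asymptotic bulk comparison.
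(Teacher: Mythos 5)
Your strategy is genuinely different from the paper's: you propose a functional-analytic index argument in the style of Kellendonk--Schulz-Baldes, Elbau--Graf, or Bourne--Rennie (Phillips spectral-flow-equals-index, Combes--Thomas decomposition into two half-plane bulk projectors plus a compact interface term, then a Toeplitz-to-Chern identity on each side), whereas the paper constructs a modified operator $\PP_\delta=\Pp_\delta-\Tt_\delta$ that keeps an essential gap as $\delta\to 0$, establishes a resolvent expansion in terms of the effective Dirac operator $\Di(\mu)$ near each of the two Dirac points (Theorem \ref{thm:3} plus the parametrix and the commutator estimate of Lemma \ref{lem:1f}), and reads off the spectral flow $-2\sgn(\te_\star)$ from the Dirac family. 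Your route is more modular and, if completed, would be more robust --- in particular it would not appear to require the hypothesis $\lr{\xi_\star,v}\notin\pi\Z$, which the paper treats as a technical restriction to be removed later.

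However, as written, the proposal has a real gap at its core. The step where you invoke ``a standard Toeplitz-to-Chern identification'' to evaluate each half-plane index as $\pm c_1(\Ee_{\delta_\flat,\pm})$ is itself a bulk-edge correspondence theorem --- it is precisely the statement, for a single periodic bulk $\Pp_{\delta_\flat,\pm}$ restricted to a half-plane, that the half-plane Toeplitz index equals the bulk Chern number. That statement is available in the discrete (tight-binding) and quantum-Hall continuum settings cited in the paper, but it is not an off-the-shelf fact for general continuous periodic Schr\"odinger operators with domain-wall interfaces, and establishing it here would require essentially the same hard analysis (trace-class estimates for cut-off spectral projectors of a continuous periodic operator, a continuum Toeplitz index formula, and a Chern--Kubo type identity) that the bulk-edge correspondence is meant to prove. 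You are therefore reducing the theorem to an unproved special case of itself. The index-splitting claim also needs more than is sketched: replacing $P$ by $\chi_+\Pi_+\chi_+ + \chi_-\Pi_-\chi_-$ modulo compacts lets you match Fredholm indices, but you would still need the two half-plane pieces to act on (approximately) orthogonal subspaces for the index to split as a sum, and this orthogonality --- not a ``vanishing net signed contribution'' --- is what needs to be quantified.

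Finally, your explanation of the role of $\lr{\xi_\star,v}\notin\pi\Z$ is not the right one. Uniformity of the gap in $\zeta$ already follows from \eqref{eq:2b} and compactness of the circle. In the paper's argument the condition is used to guarantee that the two Dirac momenta $\xi_\star^A$, $\xi_\star^B$ do not project to the same edge quasi-momentum $\zeta_\star$ modulo $2\pi\Z$ --- equivalently, that $\zeta_\star k+\R k'+2\pi\Lambda^*$ does not contain both Dirac points --- which is needed so that the local resolvent analysis near $(\zeta_\star,E_\star)$ sees a single effective Dirac operator at a time, and so that the total spectral flow is the sum over the two Dirac points.
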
 

Because the spectral flow is a topological invariant, Theorem \ref{thm:2} is stable under gap-preserving perturbations of $\Pp_\delta[\zeta]$. The condition $\lr{\xi_\star,v} \notin \pi \Z$ excludes armchair-type edges; we will deal with such edges in an upcoming work. 

When $\delta \ll 1$, edge states of operators similar to $\Pp_\delta$ were constructed in \cite{FLTW3,LWZ,Dr0} under the no-fold condition. This condition requires that the dispersion surfaces $\xi \mapsto \lambda_{0,n}(\xi)$ and $\xi \mapsto \lambda_{0,n+1}(\xi)$ do not fold over $E_\star$ except at $\{\xi_\star,-\xi_\star\} + 2\pi \Lambda^*$ -- see \cite[\S1.3]{FLTW3}. Theorem \ref{thm:2} implies that if $\Pp_{\delta_\flat}[\zeta]$ has a continuously open gap for every $\zeta \in [0,2\pi]$, two edge state must exist, even when the no-fold condition \textit{at $\delta=0$} fails. These edge states shall arise from resonant states bifurcating into the edge of the continuous spectrum -- see \cite[\S1.4]{FLTW3} and the conjecture there.

Theorem \ref{thm:2} is an index-like theorem that relates a topological index (the Chern number) to an analytic index (the spectral flow). It is the bulk-edge correspondence, an ubiquitous result in mathematical physics \cite{Hat,KRS,EG,GP,ASV,PS,BKR,Ku,Br}. Theorem \ref{thm:2} advances the current understanding~via:
\begin{itemize}
\item An analysis on a \textit{continuous, asymptotically periodic} model; see also \cite{KS,KS2,Taa} for the quantum Hall effect; \cite{Ba2} for Dirac operators; \cite{BR} for a K-theoretic approach; and \cite{Dr2} for dislocation systems. 
\item The explicit formula \eqref{eq:2d} for the bulk/edge indexes, which demonstrates the significance of Dirac points (or more generally degeneracies in the Bloch bands) in the production of topologically protected edge states. 
\end{itemize}

It would be interesting to investigate the validity of Theorem \ref{thm:1} and Theorem \ref{thm:2} for $\delta$ outside $(0,\delta_\sharp)$ or when disorder is added. This regime corresponds to the absence of mobility gap. Possible directions are suggested by \cite{EGS,Taa,GS,GT,ST}. 

Theorem \ref{thm:2} demonstrates the existence of topologically stable time-harmonic waves propagating along line defects in graphene. A recent analysis on Dirac operators \cite{Ba} suggests that these waves should be insensitive to back-scattering by local obstacles. We plan to mathematically analyze this phenomena.

\subsection{Sketches of proofs} The proof of Theorem \ref{thm:1} relies on three main steps:
\begin{itemize}
\item As a topological invariant, the Chern number does not depend on $\delta \in (0,\delta_\sharp)$: in this range, the bundles $\Ee_{\delta,\pm}$ are diffeomorphic to one another. Hence it suffices to compute $c_1(\Ee_{\delta,\pm})$ for small $\delta$ only. We then write the Chern number as the integral of the trace of the Berry curvature $\Bb_\delta(\xi)$. This formula involves the projector $\Pi_\delta(\xi)$ to the $n$-th lowest-energy eigenspaces of $\Pp_{\delta,+}(\xi)$.
\item When $\xi$ is away from Dirac momenta $\{\xi_\star,-\xi_\star\} + 2\pi \Lambda^*$, $\Pi_\delta(\xi)$ (and its derivatives) converges uniformly to $\Pi_0(\xi)$.  Hence $\Bb_\delta(\xi)$ converges uniformly to $\Bb_0(\xi)$. Because of symmetries, $\Bb_0(\xi)=0$: momenta away from $\{\xi_\star,-\xi_\star\} + 2\pi \Lambda_*$ do not contribute to the Chern number.
\item For $\xi$ near $\xi_\star$, we show that after rescaling, $\Pp_{\delta,+}(\xi)$ converges in the resolvent sense to the two-band model $\Mm_\delta(\xi)$ studied in \cite{HR}. This convergence transfers to $\Pi_\delta(\xi)$ and its derivatives, hence to $\Bb_\delta(\xi)$. The last part of the proof computes the Berry curvature and Chern number associated to the low-lying eigenbundle of $\Mm_\delta(\xi)$, eventually leading to $c_1(\Ee_{\delta,+}) = - \sgn(\te_\star)$.
\end{itemize}

In \cite[Corollary 4]{Dr0}, we showed that under a condition weaker than \eqref{eq:2b}, the edge index of $\Pp_\delta$ equals $-2 \cdot \sgn(\te_\star)$. That proof relied on a resolvent estimate for $\Pp_\delta[\zeta]$. Theorem \ref{thm:2} holds more generally. It differs from \cite[Corollary 4]{Dr0} because it applies to cases where the no-fold condition  fails.  This failure is an obstacle to construct edge states. The existence of long-lived states was instead conjectured \cite[\S1.4]{FLTW3}. 

In the setting of Theorem \ref{thm:2}, the operator $\Pp_{\delta_\flat}[\zeta]$ has an essential $L^2[\zeta]$-gap, but $\Pp_{\delta}[\zeta]$ may not have an essential spectral gap for small $\delta$. Therefore, the resolvent estimate \cite[Theorem 2]{Dr0} does not hold. 
In order to nonetheless prove Theorem \ref{thm:2}, we construct a modified operator $\PP_\delta$, with three essential properties:
\begin{itemize}
\item It has the same spectral flow as $\Pp_\delta-E_\flat$ when $\delta = \delta_\flat$;
\item It looks like $\Pp_\delta$ for momenta / energy near $(\xi_\star,E_\star)$ and $\delta$ near $0$.
\item It retains an essential $L^2[\zeta]$-gap as $\delta \in (0,\delta_\flat]$ decreases to $0$.
\end{itemize}

We can then apply the techniques of \cite{Dr0} to compute the spectral flow of $\PP_\delta-E_\star$. This relies on a resolvent expansion of $\PP_\delta$. There are two main steps:
\begin{itemize}
\item We use the limiting two-band model $\Mm_\delta(\xi)$ to approach $\PP_{\delta,+}(\xi)$, and we integrate these estimates to expand the bulk resolvents $(\PP_{\delta,+}[\zeta]-z)^{-1}$.
\item We construct a parametrix based on the bulk operators $\PP_{\delta,+}[\zeta]$. 
\end{itemize}
A family of Dirac operators -- which quantizes the limiting two-band model near infinity -- controls the effective dynamics near each of the two Dirac points. Each family has spectral flow equal to $-\sgn(\te_\star)$, which implies that $\PP_\delta$ has spectral flow $-2 \cdot \sgn(\te_\star)$.

These Dirac operators arised in previous work \cite{FLTW1,FLTW3,LWZ} where they were used to construct \textit{some} edge states as adiabatic modulations of the Dirac point Bloch modes. These constructions rely on a sophisticated Lyapounov--Schmidt reduction combined with multiscale analysis. Working at the level of the resolvent has the advantage of producing \textit{all} edge states. This knowledge is necessary to compute the edge index -- see \cite{DFW,Dr2} for bulk/edge analysis of dislocated models.

\subsection*{Notations} We will use the following notations:
\begin{itemize}\item $\Dd(z,r) \subset \C$ denotes the disk centered at $z \in \C$, of radius $r$.  
\item If $\HH$ is a Hilbert space and $A : \HH \rightarrow \HH$ is bounded, the norm of $A$ is
\begin{equation}
\| A \|_{\HH} \de \sup_{|\psi|_\HH=1} |A\psi|_{\HH}.
\end{equation}
\item If $A_\epsi : \HH \rightarrow \HH$ is a bounded operator  and $f : (0,\epsi_0] \rightarrow \R$, we write $A_\epsi = \OO_{\HH}\big(f(\epsi)\big)$ when there exists $C > 0$ such that $\|A_\epsi\|_{\HH} \leq C f(\epsi)$ for $\epsi \in (0,\epsi_0]$. 
\item If $\zeta \in \R/(2\pi \Z) \mapsto H(\zeta)$ is a continuous family of selfadjoint operators such that $0 \notin \Sigma_\ess\big(H(\zeta)\big)$, $\Sf(H)$ denotes the spectral flow of $H$ through zero as $\zeta$ spans $[0,2\pi]$ -- see \cite{Wa} for a comprehensive introduction.
\end{itemize}

\subsection*{Acknowledgments} I am grateful to M. I. Weinstein and J. Shapiro for valuable discussions. I thankfully acknowledge support from the Simons Foundation through M. I. Weinstein's Math+X investigator award $\#$376319 and from NSF DMS-1800086.

\section{Proof of Theorem \ref{thm:1}}

\subsection{Dirac points amd their bifurcations}\label{sec:2.1} Here we review honeycomb Schr\"odinger operators, Dirac points and gap openings via conjugation symmetry breaking. Let $\Lambda = \Z v_1 \oplus \Z v_2$ be the equilateral $\Z^2$-lattice:
\begin{equation}\label{eq:4b}
v_1 = a\matrice{\sqrt{3} \\ 1}, \ \ \ v_2= a\matrice{\sqrt{3} \\ -1},
\end{equation}
where  $a > 0$ is a constant such that $\Det[v_1,v_2] = 1$. The dual basis $k_1, k_2$ consists of two vectors in $(\R^2)^*$ which satisfy $\blr{k_i,v_j} = \delta_{ij}$. The dual lattice is $\Lambda^* = \Z k_1 \oplus \Z k_2$. The corresponding fundamental cell and dual fundamental cell are 
\begin{equation}\label{eq:4c}
\Ll \de \big\{ sv_1 + s' v_2 : s, s' \in [0,1)  \big\}, \ \ \ \ \Ll^* \de \big\{ \tau k_1 + \tau' k_2 : \tau, \tau' \in [0,2\pi)  \big\}.
\end{equation}

Honeycomb potential are smooth functions $\R^2 \rightarrow \R$ that are even, $\Lambda$-periodic and invariant under $2\pi/3$-rotations -- see \cite[Definition 1]{Dr0}. Let $\Vv$ be a honeycomb potential and $\Pp_0 = -\Delta_{\R^2}+\Vv$. Since $\Pp_0$ is periodic w.r.t. $\Lambda$, it acts on 
\begin{equation}
L^2_\xi \de \Big\{ u \in L^2_\loc(\R^2,\C) : u(x+w) = e^{i\lr{\xi,w}} u(x)\Big\},
\end{equation}
for all $\xi \in \R^2$.
We denote by $\Pp_0(\xi)$ the operator $\Pp_0$ acting on $L^2_\xi$; it has discrete spectrum $\lambda_{0,1}(\xi) \leq \dots \leq \lambda_{0,j}(\xi) \leq \dots$.

\begin{defi}\label{def:1} A pair $(\xi_\star,E_\star) \in \R^2 \times \R$ is a Dirac point of $\Pp_0$ if:
\begin{itemize}
\item[(i)] $E_\star$ is a $L^2_{\xi_\star}$-eigenvalue of $\Pp_0(\xi_\star)$
of multiplicity $2$;
\item[(ii)] There exists an orthonormal basis $\{\phi_1,\phi_2\}$ of $\ker_{L^2_{\xi_\star}}\hspace*{-1.5mm}\big(P_0(\xi_\star)-E_\star\big)$ such that 
\begin{equation}\label{eq:5a}
\phi_1(Rx) = e^{2i\pi/3} \phi_1(x), \ \ \ \ \phi_2(x) = \ove{\phi_1(-x)}, \ \ \ \ \phi_2(Rx) = e^{-2i\pi/3} \phi_2(x).
\end{equation}
\item[(iii)] There exist $n \geq 1$ and $\nu_F > 0$ such that  for $\xi$ close to $\xi_\star$,
\begin{equations}
\lambda_{0,n}(\xi) = E_\star - \nu_F \cdot |\xi-\xi_\star| + O(\xi-\xi_\star)^2,
 \\
\lambda_{0,n+1}(\xi) = E_\star + \nu_F \cdot |\xi-\xi_\star| + O(\xi-\xi_\star)^2.
\end{equations}
\end{itemize}
\end{defi}

In a seminal paper \cite{FW}, Fefferman and Weinstein  showed that for a generic choice of $\Vv$, $\Pp_0$ admit Dirac points $(\xi_\star,E_\star)$. Different perspectives -- on the proof and on the context -- have appeared since \cite{Le,FLTW1,LWZ,BC,ABL,AHY}. Because of \eqref{eq:5a}, $\xi_\star \in \big\{\xi_\star^A,\xi_\star^B\big\} + 2\pi \Lambda^*$ with
\begin{equation}\label{eq:3y}
\xi_\star^A \de \dfrac{2\pi}{3} (2k_1+k_2), \ \ \ \ \xi_\star^B \de \dfrac{2\pi}{3} (k_1+2k_2) = -\xi_\star^A \mod 2\pi \Lambda^*.
\end{equation}
Since $\Pp_0$ is invariant under spatial inversion, $\big(\xi_\star^A,E_\star\big)$ is a Dirac point of $\Pp_0$ if and only if $\big(\xi_\star^B,E_\star\big)$ is another Dirac point of $\Pp_0$. In the rest of the paper, we assume that $(\xi_\star,E_\star)$ is a Dirac point of $\Pp_0$, associated to the $n$-th band, and such that \eqref{eq:2a} holds:
\begin{equation}\label{eq:4c}
\lambda_{0,n}(\xi) = \lambda_{0, n+1}(\xi) \ \ \Rightarrow \ \ \xi \in \big\{\xi_\star^A,\xi_\star^B\big\} + 2\pi \Lambda^*.
\end{equation}
We take $\{\phi_1,\phi_2\} \subset L^2_{\xi_\star}$ satisfying \eqref{eq:5a}.

Introduce the operators
\begin{equations}\label{eq:5d}
\Pp_{\delta,\pm} \de \Pp_0 \pm \delta \Ww = -\Delta_{\R^2} + \Vv \pm \delta \Ww, \ \ \ \ \Ww = \Aa \cdot D_x + D_x \cdot \Aa, \ \ \ \ D_x \de \dfrac{1}{i} \matrice{\p_{x_1} \\ \p_{x_2}},
\end{equations}
where $\Aa \in C^\infty\big(\R^2,\R^2\big)$ is periodic w.r.t. $\Lambda$ and $\Aa(x) = \Aa(-x)$. They are conjugation-breaking perturbations of $\Pp_0$; they represent graphene-like structures affected by a magnetic field. The work \cite{LWZ} considers other conjugation-breaking operators, and constructs edge states in a perturbative adiabatic regime. We define
\begin{equation}
\delta_\sharp \de \inf \big\{\delta > 0 : \exists \xi \in \R^2, \  \lambda_{\delta,n}(\xi) = \lambda_{\delta,n+1}(\xi) \big\}.
\end{equation}
For $\delta \in (0,\delta_\sharp)$, the $n$-th $L^2_\xi$-gap of $\Pp_{\delta,\pm}(\xi)$ is open:
\begin{equation}\label{eq:4a}
\inf_{\xi \in \R^2} \big(\lambda_{\delta,n+1}(\xi) - \lambda_{\delta,n}(\xi)\big) > 0.
\end{equation}
If \eqref{eq:2a} holds and $\te_\star = \lr{\phi_1,\Ww\phi_1}_{L^2_{\xi_\star}} \neq 0$ then $\delta_\sharp > 0$ -- see Lemma \ref{lem:1b}. This means that breaking conjugation invariance opens the $n$-th gap of $\Pp_\delta(\xi)$. 
In the rest of the paper, we work with $\delta \in (0,\delta_\sharp)$; in particular, \eqref{eq:4a} always holds.

\subsection{Bulk index}\label{sec:2.2a} We review the definition of bulk index. 
For $\delta \in (0,\delta_\sharp)$, the gap condition \eqref{eq:4a} holds. We can then define a rank-$n$ vector bundle $\Ee_{\delta,+}$ over the two-torus $\Tt^2 = \R^2/(2\pi \Lambda^*)$: the fiber at a point $\xi \in \Tt^2$ is the vector space
\begin{equation}
\Ee_{\delta, +}(\xi) \de \bigoplus_{j=1}^{n} \ker_{L^2_{\xi_\star}}\big( \Pp_{\delta,+}(\xi) - \lambda_{\delta,j}(\xi) \big) \subset L^2_{\xi}.
\end{equation}
When provided with its natural structure, this bundle is smooth because of the gap condition \eqref{eq:4a} and \cite[\S VII.1.3, Theorem 1.7]{Ka}. In order to define the bulk index, we first look at $\Ee_{\delta,+}$ as a bundle over $\R^2$ instead of $\Tt^2$. Since $\R^2$ is contractible, this bundle is trivial -- see \cite[pp. 15]{Mo}. Therefore it admits a smooth orthonormal frame
\begin{equation}\label{eq:3h}
\xi \in \R^2 \ \mapsto \ \big( \psi_{\delta,1}(\xi), \dots, \psi_{\delta,n}(\xi) \big) \in L^2_{\xi} \times \dots \times L^2_{\xi}.
\end{equation}
For every $\delta \in (0,\delta_\sharp)$, the orthogonal projector $\Pi_\delta(\xi) : L^2_\xi \rightarrow L^2_\xi$ onto $\Ee_{\delta,+}$ varies smoothly with $\xi$ -- this means that $e^{-i\lr{\xi, x}} \Pi_\delta(\xi) e^{i\lr{\xi, x}}$
forms a smooth family of operators on $L^2_0$. The operator $\Pi_\delta(\xi)$ relates to the orthonormal frame \eqref{eq:3h} via
\begin{equation}
\xi \in \R^2 \ \ \Rightarrow \ \ \Pi_\delta(\xi) = \sum_{j=1}^n \psi_{\delta,j}(\xi) \otimes \psi_{\delta,j}(\xi).
\end{equation}

Let $\Gamma\big(\R^2,\Ee_{\delta,+}\big)$ be the space of smooth sections of the bundle $\Ee_{\delta,+}$ over $\R^2$; fix $\sigma \in \Gamma\big(\R^2,\Ee_{\delta,+}\big)$. We write $\sigma = \sum_{j=1}^n \sigma_{\delta,j} \cdot \psi_{\delta,j}$; the coordinates $\sigma_{\delta,j}$ are smooth functions $\R^2 \rightarrow \C$.  For $\xi \in \R^2$, we set
\begin{equations}
\nabla \sigma(\xi) \de  \sum_{j=1}^n  d\sigma_{\delta,j}(\xi) \cdot \psi_{\delta,j}(\xi)   + \sigma_{\delta,j}(\xi) \cdot \Pi_\delta(\xi)  \big( \p \psi_{\delta,j}(\xi)  \big), \ \ \ \ \  \ \text{where}
\\
d\sigma_{\delta,j}(\xi) \de \sum_{m=1}^2 \dd{\sigma_{\delta,j}(\xi)}{\xi_m} \cdot d\xi_m, \ \ \ \ \p \psi_{\delta,j}(\xi) \de \sum_{m=1}^2 e^{i\lr{\xi,x}} \cdot \dd{\big(e^{-i\lr{\xi,x}}\psi_{\delta,j}(\xi)\big)}{\xi_m} \cdot d\xi_m.
\end{equations}
We observe that $\nabla \sigma$ is an element of $\Gamma\big(\R^2,\Ee_{\delta,+} \otimes \Tt^* \R^2\big)$.

If $\sigma \in \Gamma\big(\Tt^2,\Ee_{\delta,+}\big)$, we can see $\sigma$ as an element of $\Gamma\big(\R^2,\Ee_{\delta,+}\big)$. Then $\nabla \sigma$ happens to be  an element of $\Gamma\big(\Tt^2,\Ee_{\delta,+} \otimes T^*\Tt^2\big)$.\footnote{For a proof when $n=1$, we refer to \cite[\S4]{Dr2}; the same argument applies to $n > 1$. It relies on the fact that the frame $(\psi_{\delta,1}, \dots, \psi_{\delta,n}
)$ defines coordinates on $\Ee_{\delta,+}$ with unitary transition functions.} Since $\nabla$ satisfies Leibnitz's rule, $\nabla$ is a connection on the bundle $\Ee_{\delta,+} \rightarrow \Tt^2$. It is called the Berry connection; its curvature is the Berry curvature \cite{Si,Be}. 

The trace of the Berry curvature, $\Bb_\delta(\xi) d\xi_1 \wedge d\xi_2$ has an expression in terms of $\Pi_\delta(\xi)$ that is manifestly gauge-invariant, i.e. independent of the choice of frame in \eqref{eq:3h}: 
\begin{equations}\label{eq:3i}
\Bb_\delta(\xi) \de \Tr\Big( \Pi_\delta(\xi) \big[ \nabla_{\xi_1} \Pi_\delta(\xi), \nabla_{\xi_2} \Pi_\delta(\xi) \big] \Big) \ \ \ \  \text{  where} 
 \\
\nabla_{\xi_m} \Pi_\delta(\xi) \de e^{i\lr{\xi,x}} \cdot \dd{\left(e^{-i\lr{\xi,x}} \Pi_\delta(\xi) e^{i\lr{\xi,x}}\right) }{\xi_m} \cdot e^{-i\lr{\xi,x}} \ : \  L^2_\xi \rightarrow L^2_\xi.
\end{equations}
We mention that $\Bb_\delta(\xi)$ is purely imaginary. Indeed, using that $\Pi_\delta(\xi)$ is selfadjoint and that the trace is cyclic, $\ove{\Bb_\delta(\xi)}$ equals
\begin{equation}
\trace\Big(  \big[ \nabla_{\xi_2} \Pi_\delta(\xi)^*, \nabla_{\xi_1} \Pi_\delta(\xi)^* \big] \Pi_\delta(\xi)^* \Big) = \trace\Big( \Pi_\delta(\xi) \big[ \nabla_{\xi_2} \Pi_\delta(\xi), \nabla_{\xi_1} \Pi_\delta(\xi)\big] \Big) = -\Bb_\delta(\xi).
\end{equation}

The first Chern class of $\Ee_{\delta,+}$, or bulk index, is the integral of $\Bb_\delta(\xi)$ over $\Tt^2$:
\begin{equations}\label{eq:2n}
 \ c_1(\Ee_{\delta,+}) = \dfrac{i}{2\pi} \int_{\Tt^2} \Bb_\delta(\xi) \cdot d\xi.
\end{equations}
See for instance \cite[(15)]{Ba2}. This is a topological integer -- see e.g. \cite[pp. 49]{BGV}. In particular, it doe
s not depend on $\delta \in (0,\delta_\sharp)$. To prove Theorem \ref{thm:1} we will compute $c_1(\Ee_{\delta,+})$ in the limit $\delta \rightarrow 0$: because of topological invariance,
\begin{equation}
\delta \in (0,\delta_\sharp) \ \ \Rightarrow \ \ 
c_1(\Ee_{\delta,+}) = \lim_{\delta \rightarrow 0^+} c_1(\Ee_{\delta,+}).
\end{equation}

\subsection{Berry curvature of the unperturbed operator}\label{sec:2.2} Let $\Ee_0 \rightarrow \Tt^2$ be the bundle with fibers
\begin{equation}
\Ee_0(\xi) = \bigoplus_{j=1}^{n} \ker_{L^2_\xi}\big( \Pp_0(\xi) - \lambda_{0,j}(\xi) \big) \subset L^2_\xi, \ \ \ \ \Pp_0(\xi) = -\Delta_{\R^2} + \Vv : L^2_\xi \rightarrow L^2_\xi.
\end{equation}
If \eqref{eq:4a} holds, the restriction of $\Ee_0$ to $\Tt^2 \setminus \big(\{\xi_\star^A,\xi_\star^B\} + 2\pi\Lambda^*\big)$ is a smooth vector bundle of rank $n$ (when provided with its canonical structure). The trace of the Berry curvature $\Bb_0$ of this bundle is defined via \eqref{eq:3i}. We show here that $\Bb_0$ vanishes uniformly because $\Pp_0$ is invariant under both $\II$ (spatial inversion) and $\CCC$ (complex conjugation).

Since $\CCC \Pp_0(\xi) \CCC^{-1} = \Pp_0(-\xi)$, we deduce that 
$\CCC \Pi_0(\xi) \CCC^{-1} = \Pi_0(-\xi)$. It follows that 
\begin{equations}\label{eq:2e}
\ove{\Bb_0(\xi)} = \trace\Big( \CCC \cdot \Pi_0(\xi) \big[ \nabla_{\xi_1} \Pi_0(\xi), \nabla_{\xi_2} \Pi_0(\xi) \big]\cdot \CCC^{-1} \Big)
\\
 = \trace\Big( \Pi_0(-\xi) \big[ \nabla_{\xi_1} \Pi_0(-\xi), \nabla_{\xi_2} \Pi_0(-\xi) \big] \Big) = \Bb_0(-\xi).
\end{equations}
Since $\Bb_0(\xi)$ is a purely imaginary number, we deduce that $\Bb_0(-\xi) = -\Bb_0(\xi)$. Since $\II \Pp_0(\xi) \II^{-1} = \Pp_0(-\xi)$, $\III \Pi_0(\xi) \III^{-1} = \Pi_0(-\xi)$ and
\begin{equations}
\Bb_0(\xi) = \trace\Big( \III \cdot \Pi_0(\xi) \big[ \nabla_{\xi_1} \Pi_0(\xi), \nabla_{\xi_2} \Pi_0(\xi) \big] \cdot \III^{-1} \Big)
\\
 = \trace\Big( \Pi_0(-\xi) \big[ \nabla_{\xi_1} \Pi_0(-\xi), \nabla_{\xi_2} \Pi_0(-\xi) \big] \Big) = \Bb_0(-\xi).
\end{equations}
This shows that $\Bb_0(\xi) = \Bb_0(-\xi)$. We conclude that $\Bb_0(\xi) = 0$. 

\begin{rmk}\label{rmk:1} Fefferman--Lee-Thorp--Weinstein \cite{FLTW3} studied an operator $\PPP_\delta$ that shares many of the characteristics of $\Pp_\delta$, but that is invariant under spatial inversion instead of complex conjugation. They produced two edge states for $\PPP_\delta[\zeta]$ as adiabatic combinations of the Dirac point Bloch modes with eigenvectors of an emerging Dirac operator. The associated time-harmonic waves propagate in opposite directions. In \cite{Dr0} we proved that all edge states take this form, and showed that the corresponding spectral flow vanishes. This agrees with the bulk-edge correspondence. Indeed, the bulk operators are invariant under $\CCC$: the trace of the Berry curvature is odd -- see \eqref{eq:2e}. Since the Chern number is the integral of the Berry curvature, it vanishes.
\end{rmk}

\subsection{Away from Dirac momenta} In this section we study $\Bb_\delta(\xi)$ when $\delta$ is small and $\xi$ is away from $\big\{\xi_\star^A,\xi_\star^B\big\} + 2\pi \Lambda^*$. Define
\begin{equation}
\rho (\xi) \de \dist\left( \xi, \big\{\xi_\star^A,\xi_\star^B\big\} + 2\pi \Lambda^*  \right).
\end{equation}

\begin{lem}\label{lem:1a} Under \eqref{eq:2a}, for every $\epsi > 0$, there exists $C > 0$ such that
\begin{equations}
\delta \in [0,\delta_\sharp) \ \ \Rightarrow \ \ 
\sup\big\{ |\Bb_\delta(\xi)| : \ \rho(\xi) \geq \epsi \big\} \leq C\delta.
\end{equations}
\end{lem}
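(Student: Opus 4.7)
The plan is to prove that on the region $\{\rho(\xi) \geq \epsi\}$ the spectral projector $\Pi_\delta(\xi)$ together with its tangential derivatives $\nabla_{\xi_m}\Pi_\delta(\xi)$ converges to its $\delta = 0$ counterpart at rate $\OO(\delta)$ in operator norm, uniformly in $\xi$. Combined with the vanishing $\Bb_0(\xi) \equiv 0$ established in \S\ref{sec:2.2} and the trilinear formula \eqref{eq:3i} for $\Bb_\delta$, this directly yields the desired bound $|\Bb_\delta(\xi)| \leq C\delta$.

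The first step is to secure a uniform spectral gap. Assumption \eqref{eq:2a} provides $c(\epsi) > 0$ with $\lambda_{0,n+1}(\xi) - \lambda_{0,n}(\xi) \geq 2c(\epsi)$ on $\{\rho(\xi) \geq \epsi\}$ (using continuity of the Bloch eigenvalues and periodicity under $2\pi\Lambda^*$ to reduce to a compact subset of $\Ll^*$). Since $\delta\Ww$ is uniformly $\Pp_0$-relatively bounded, standard perturbation theory together with the defining property of $\delta_\sharp$ furnishes $c'(\epsi) > 0$ such that $\lambda_{\delta,n+1}(\xi) - \lambda_{\delta,n}(\xi) \geq c'(\epsi)$ uniformly for $\delta \in [0,\delta_\sharp)$ and $\rho(\xi) \geq \epsi$. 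I then fix a rectangular contour $\gamma \subset \C$ that encloses exactly $\lambda_{\delta,1}(\xi),\dots,\lambda_{\delta,n}(\xi)$ and stays at distance $\geq c'(\epsi)/2$ from the spectrum.

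Next, I would conjugate to the fixed fiber $L^2_0$ by setting $\widetilde\Pp_{\delta,+}(\xi) \de e^{-i\lr{\xi,x}} \Pp_{\delta,+}(\xi) e^{i\lr{\xi,x}}$; this family depends polynomially on $\xi$ with coefficients uniformly bounded on $\Ll^*$, and $\widetilde\Ww(\xi)$ is uniformly bounded as a map $H^2(\Ll) \to L^2_0$. The second resolvent identity
\[
(\widetilde\Pp_{\delta,+}(\xi)-z)^{-1} - (\widetilde\Pp_0(\xi)-z)^{-1} = -\delta\,(\widetilde\Pp_0(\xi)-z)^{-1}\,\widetilde\Ww(\xi)\,(\widetilde\Pp_{\delta,+}(\xi)-z)^{-1},
\]
together with the elliptic bound $\|(\widetilde\Pp_0(\xi)-z)^{-1}\|_{L^2_0 \to H^2(\Ll)} = O(1)$ uniformly for $z \in \gamma$ and $\xi \in \Ll^*$, gives $(\widetilde\Pp_{\delta,+}(\xi)-z)^{-1} = (\widetilde\Pp_0(\xi)-z)^{-1} + \OO(\delta)$. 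Integrating along $\gamma$ yields $\Pi_\delta(\xi) = \Pi_0(\xi) + \OO(\delta)$. Differentiating the contour integral in $\xi_m$ and noting that $\p_{\xi_m}\widetilde\Pp_{\delta,+}(\xi)$ is a first-order operator with uniformly bounded coefficients, the same Neumann-type argument produces $\nabla_{\xi_m}\Pi_\delta(\xi) = \nabla_{\xi_m}\Pi_0(\xi) + \OO(\delta)$, again uniformly on the target region.

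Plugging both expansions into \eqref{eq:3i} and invoking the vanishing $\Bb_0 \equiv 0$ then gives $|\Bb_\delta(\xi)| \leq C\delta$ as claimed. The main technical obstacle is ensuring that all resolvent and derivative estimates are genuinely uniform in $\xi$: periodicity under $2\pi\Lambda^*$ reduces the issue to a compact subset of $\Ll^*$, where the polynomial $\xi$-dependence of $\widetilde\Pp_{\delta,+}(\xi)$ and standard elliptic regularity deliver the required bounds. A secondary subtlety is the uniform persistence of the gap as $\delta$ ranges over $[0,\delta_\sharp)$; this uses that by the very definition of $\delta_\sharp$ the gap does not close anywhere in this range, and the specific location of potential closures plays no role in the argument above.
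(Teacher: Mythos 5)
Your proposal takes essentially the same route as the paper: show that $\Pi_\delta(\xi)$ and its $\xi$-derivatives converge to their $\delta = 0$ counterparts at rate $\OO(\delta)$ in operator norm, uniformly on $\{\rho(\xi) \geq \epsi\}$, and then invoke $\Bb_0 \equiv 0$. The paper compresses the perturbative step by citing Kato's smooth-family theorem, whereas you unpack it via the second resolvent identity; these amount to the same estimate. Two points are worth flagging. First, the uniform gap $c'(\epsi) > 0$ over the \emph{entire} range $\delta \in [0,\delta_\sharp)$ does not actually follow from the definition of $\delta_\sharp$: as $\delta \to \delta_\sharp^-$ the $n$-th gap may close at some $\xi_\infty$ with $\rho(\xi_\infty) \geq \epsi$, so the uniform lower bound need not persist all the way up to $\delta_\sharp$. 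The paper's own argument has the same implicit $\delta$-uniformity question (it only secures smoothness on compacts of $(\delta,\xi)$), and it is harmless because only the small-$\delta$ regime enters the proof of Theorem~\ref{thm:1}; you just should not assert that the definition of $\delta_\sharp$ by itself grants a uniform gap. Second, passing from the operator-norm expansion $\Pi_\delta(\xi)\big[\nabla_{\xi_1}\Pi_\delta(\xi),\nabla_{\xi_2}\Pi_\delta(\xi)\big] = \Pi_0(\xi)\big[\nabla_{\xi_1}\Pi_0(\xi),\nabla_{\xi_2}\Pi_0(\xi)\big] + \OO_{L^2_\xi}(\delta)$ to the claimed trace estimate requires a word, since operator norm does not control the trace in general. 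The paper observes that each side has rank at most $n$, hence the remainder has rank at most $2n$ and its trace is bounded by $2n$ times its operator norm; your write-up should include this step.
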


\begin{proof} Fix $\epsi > 0$. We observe that the family of bounded operators on $L^2_0$
\begin{equation}\label{eq:1u}
(\delta, \xi) \in [0,\delta_\sharp) \times \big\{ \xi \in \R^2 : \ \rho(\xi) \geq \epsi \big\} \  \mapsto \ e^{-i\lr{\xi,x}} \Pi_\delta(\xi)e^{i\lr{\xi,x}}
\end{equation}
is smooth because of \eqref{eq:2a} and \cite[\S VII.1.3, Theorem 1.7]{Ka}.  Therefore, the estimate
\begin{equation}
\Pi_\delta(\xi) \big[ \nabla_{\xi_1} \Pi_\delta(\xi), \nabla_{\xi_2} \Pi_\delta(\xi) \big] = \Pi_0(\xi) \big[ \nabla_{\xi_1} \Pi_0(\xi), \nabla_{\xi_2} \Pi_0(\xi) \big] + \OO_{L^2_\xi}(\delta)
\end{equation}
holds uniformly for $\xi$ in compact subsets of $\R^2$ with $\rho(\xi) \geq \epsi$. Since \eqref{eq:1u} varies periodically with $\xi$, it holds uniformly on $\big\{ \xi \in \R^2 : \ \rho(\xi) \geq \epsi \big\}$. The remainder $\OO_{L^2_\xi}(\delta)$ is an operator of rank at most $2n$ because the leading order terms are of rank $n$. Therefore we can take the trace on both sides and deduce 
\begin{equation}
\Tr\left(\Pi_\delta(\xi) \big[ \nabla_{\xi_1} \Pi_\delta(\xi), \nabla_{\xi_2} \Pi_\delta(\xi) \big] \right) = \Tr\left(\Pi_0(\xi) \big[ \nabla_{\xi_1} \Pi_0(\xi), \nabla_{\xi_2} \Pi_0(\xi) \big]\right) + O(\delta),
\end{equation}
uniformly for $\xi \in \R^2$ with $\rho(\xi) \geq \epsi$. Hence $\Bb_\delta(\xi) = \Bb_0(\xi) + O(\delta)$. Since $\Bb_0(\xi) = 0$, the proof is complete.
\end{proof}

\subsection{Near Dirac momenta}
Fix a Dirac point $(\xi_\star,E_\star)$ of $\Pp_0$. In this section, we estimate $\Bb_\delta$. We first prove spectral estimates at pairs momentum / energy $(\xi,z)$ near $(\xi_\star,E_\star)$. We recall the identity \cite[Proposition 4.5]{FLTW3} -- see \cite[Lemma 2.1]{Dr0} for the version needed here: there exists $\nu_\star \in \C$ with $|\nu_\star| = \nu_F$ such that
\begin{equation}\label{eq:1i}
\forall \eta \in \R^2 \equiv \C, \ \ \  \ \nu_\star \eta = 2 \big \langle \phi_1, (\eta \cdot D_x) \phi_2 \big \rangle_{L^2_{\xi_\star}}, \ \ \ \ D_x \de \dfrac{1}{i} \matrice{\p_{x_1} \\ \p_{x_2}}.
\end{equation}
Introduce the matrix
\begin{equation}
\Mm_\delta(\xi) = \matrice{E_\star + \delta \te_\star & \nu_\star \cdot (\xi-\xi_\star) \\ \ove{\nu_\star  \cdot (\xi-\xi_\star)} & E_\star-\delta\te_\star}, \ \ \ \ \te_\star \de \lr{\phi_1,\Ww\phi_1}_{L^2_{\xi_\star}}.
\end{equation}
For every $\xi \in \R^2$ and $\delta > 0$, the matrix $\Mm_\delta(\xi)$ has two distinct eigenvalues
\begin{equation}
\mu_\delta^\pm(\xi) = E_\star \pm r_\delta(\xi), \ \ \ \ r_\delta(\xi) \de \sqrt{ \te_F^2 \cdot \delta^2+ \nu_F^2 \cdot |\xi-\xi_\star|^2}, \ \ \ \ \te_F \de |\te_\star|.
\end{equation}
 The difference between the two eigenvalues of $\Mm_\delta(\xi)$ is $2r_\delta(\xi)$. Hence, 
\begin{equation}\label{eq:1b}
z \in \p\Dd\big(\mu_\delta^\pm(\xi),r_\delta(\xi)\big) \ \ \Rightarrow \ \ \big|z-\mu_\delta^{\mp}(\xi) \big| \geq r_\delta(\xi).
\end{equation}
The spectral theorem shows that for $z \in \p\Dd\big(\mu_\delta^\pm(\xi),r_\delta(\xi)\big)$, $\Mm_\delta(\xi) - z$ is invertible and
\begin{equation}\label{eq:1d}
\big(z-\Mm_\delta(\xi)\big)^{-1}  = \OO_{\C^2}\left(r_\delta(\xi)^{-1}\right) = \OO_{\C^2}\left( (\delta+|\xi-\xi_\star|)^{-1} \right).
\end{equation}

Introduce the operator
\begin{equation}
\Jj_0(\xi) : L^2_\xi \rightarrow \C^2, \ \ \ \ \Jj_0(\xi)  u = \matrice{ \big\langle e^{i\lr{\xi-\xi_\star,x}} \phi_1, \  u \big\rangle_{L^2_\xi} \\ \big\langle e^{i\lr{\xi-\xi_\star,x}} \phi_2, \ u \big\rangle_{L^2_\xi}}.
\end{equation}

\begin{lem}\label{lem:1d} Assume that $\te_\star \neq 0$. There exist $\delta_0$ and $\epsi_0 > 0$ such that if
\begin{equation}\label{eq:1f}
\delta \in (0,\delta_0), \ \ |\xi - \xi_\star| < \epsi_0, \ \ z \in \p \Dd\big(\mu_\delta^\pm(\xi),r_\delta(\xi)\big)
\end{equation}
then $\Pp_{\delta,+}(\xi)-z : H^2_\xi \rightarrow L^2_\xi$ is invertible and 
\begin{equation}
 \big(z-\Pp_{\delta,+}(\xi) \big)^{-1} = \Jj_0(\xi)^* \cdot \big( z-\Mm_\delta(\xi) \big)^{-1} \cdot \Jj_0(\xi) + \OO_{L^2_\xi}(1).
\end{equation}
\end{lem}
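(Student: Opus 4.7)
\emph{Proof proposal.} The plan is to apply a Feshbach--Schur reduction on the near-Dirac subspace. Set $\eta \de \xi - \xi_\star$, let $P(\xi) \de \Jj_0(\xi)^*\Jj_0(\xi)$ be the orthogonal projector onto $\spane\{e^{i\lr{\eta,x}}\phi_1, e^{i\lr{\eta,x}}\phi_2\} \subset L^2_\xi$, and set $Q(\xi) \de \Id - P(\xi)$. With respect to the splitting $L^2_\xi = \mathrm{Ran}(P(\xi)) \oplus \mathrm{Ran}(Q(\xi))$ one writes $\Pp_{\delta,+}(\xi) - z$ as a two-by-two block matrix with entries $A,B,C,D$. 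The first task is to show that the complementary block $D = Q(\xi)(\Pp_{\delta,+}(\xi) - z)Q(\xi)$ is uniformly invertible on $\mathrm{Ran}(Q(\xi))$ for small $\delta, |\eta|, |z-E_\star|$. At $(\delta,\xi,z) = (0,\xi_\star,E_\star)$ this is immediate from \eqref{eq:2a}, which ensures $E_\star$ is isolated in the spectrum of $\Pp_0(\xi_\star)$ with multiplicity exactly two; Kato's perturbation theory \cite[\S VII.1.3, Theorem 1.7]{Ka} then propagates the bound $\|D^{-1}\|_{L^2_\xi} = \OO(1)$ to a neighborhood, and by shrinking $\delta_0,\epsi_0$ we may keep $z \in \p\Dd(\mu_\delta^\pm(\xi),r_\delta(\xi))$ close to $E_\star$.

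Next, I would compute the compressed Hamiltonian $\Jj_0(\xi)\Pp_{\delta,+}(\xi)\Jj_0(\xi)^*$ on $\C^2$. Using $\Pp_0(e^{i\lr{\eta,x}}\phi_k) = e^{i\lr{\eta,x}}(E_\star + 2\eta\cdot D_x + |\eta|^2)\phi_k$ and $\Ww(e^{i\lr{\eta,x}}\phi_k) = e^{i\lr{\eta,x}}\Ww\phi_k + 2(\eta\cdot\Aa)e^{i\lr{\eta,x}}\phi_k$, the $(j,k)$ entry expands as
\[
E_\star\delta_{jk} + 2\lr{\phi_j, \eta\cdot D_x\phi_k}_{L^2_{\xi_\star}} + \delta\lr{\phi_j, \Ww\phi_k}_{L^2_{\xi_\star}} + \OO\big((\delta + |\eta|)^2\big).
\]
The $2\pi/3$-rotation covariance in \eqref{eq:5a} forces $\lr{\phi_j, D_x\phi_j}_{L^2_{\xi_\star}}$ and $\lr{\phi_1,\Ww\phi_2}_{L^2_{\xi_\star}}$ to vanish, while $\phi_2 = \ove{\phi_1(-\cdot)}$ combined with $\Aa$ odd (equivalently, $\CCC\Ww\CCC^{-1} = -\Ww$) yields $\lr{\phi_2,\Ww\phi_2}_{L^2_{\xi_\star}} = -\te_\star$; together with \eqref{eq:1i} this matches the leading-order compressed Hamiltonian to $\Mm_\delta(\xi) - z$, so
$\Jj_0(\xi)(\Pp_{\delta,+}(\xi)-z)\Jj_0(\xi)^* = \Mm_\delta(\xi) - z + \OO_{\C^2}\big((\delta+|\eta|)^2\big)$.

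The same expansion shows $(\Pp_{\delta,+}(\xi) - E_\star)(e^{i\lr{\eta,x}}\phi_k) = \OO_{L^2_\xi}(\delta+|\eta|)$, giving $\|B\|, \|C\| = \OO(\delta+|\eta|)$. Combined with $\|D^{-1}\| = \OO(1)$, the effective Hamiltonian $M_{\mathrm{eff}} \de \Jj_0(\xi)(A - BD^{-1}C)\Jj_0(\xi)^*$ on $\C^2$ still equals $\Mm_\delta(\xi) - z + \OO_{\C^2}((\delta+|\eta|)^2)$. On the contour, \eqref{eq:1b}--\eqref{eq:1d} give $\|(z-\Mm_\delta(\xi))^{-1}\|_{\C^2} \leq r_\delta(\xi)^{-1}$, and $r_\delta(\xi) \geq c(\delta+|\eta|)$ makes a Neumann series converge: $M_{\mathrm{eff}}^{-1} = -(z-\Mm_\delta(\xi))^{-1} + \OO_{\C^2}(1)$. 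Schur's formula then writes $(z-\Pp_{\delta,+}(\xi))^{-1}$ as four blocks whose $PP$-block equals $\Jj_0(\xi)^*(z-\Mm_\delta(\xi))^{-1}\Jj_0(\xi) + \OO_{L^2_\xi}(1)$; the three other blocks are each $\OO_{L^2_\xi}(1)$, as they carry factors $D^{-1}$ of size $\OO(1)$ and/or products of $BD^{-1}, D^{-1}C$ of size $\OO(\delta+|\eta|)$ with $M_{\mathrm{eff}}^{-1}$ of size $\OO(r_\delta^{-1})$.

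The main obstacle is the clean extraction of $\Mm_\delta(\xi)$ from the compressed Hamiltonian: matching the off-diagonal entries $\nu_\star\eta, \ove{\nu_\star\eta}$ via \eqref{eq:1i} and the diagonal entries $\pm\delta\te_\star$ via the symmetry identity $\lr{\phi_2,\Ww\phi_2}_{L^2_{\xi_\star}} = -\lr{\phi_1,\Ww\phi_1}_{L^2_{\xi_\star}}$ requires a combined use of all three symmetries in \eqref{eq:5a}. Once this identification is secured, the rest is bookkeeping: the resolvent bound \eqref{eq:1d} is tight enough to collapse the quadratic error in the Schur complement down to $\OO_{L^2_\xi}(1)$, producing the claimed expansion.
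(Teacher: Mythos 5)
Your proposal is essentially the paper's proof: you use the same near-Dirac subspace $\spane\{e^{i\lr{\xi-\xi_\star,x}}\phi_1, e^{i\lr{\xi-\xi_\star,x}}\phi_2\}$, the same Feshbach--Schur reduction, the same uniform bound $\|D^{-1}\| = \OO(1)$ (via Kato/isolation of $E_\star$), the same bounds $\|B\|,\|C\| = \OO(\delta+|\xi-\xi_\star|)$, the same expansion of the compressed Hamiltonian to $\Mm_\delta(\xi)+\OO((\delta+|\eta|)^2)$ and a Neumann series exploiting $r_\delta(\xi)\gtrsim\delta+|\eta|$ (which requires $\te_\star\neq 0$), and the same Schur block bookkeeping to land on $\OO_{L^2_\xi}(1)$. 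The paper simply packages some of these bounds in terms of $r_\delta(\xi)$ rather than $\delta+|\eta|$, and cites \cite[Lemma 7.3]{Dr0} and \cite[Lemma 2.1]{Dr0} for the symmetry identities that you re-derive.

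One small correction in your symmetry bookkeeping: the vanishing $\lr{\phi_1,\Ww\phi_2}_{L^2_{\xi_\star}}=0$ is not a consequence of the $R$-covariance in \eqref{eq:5a}, because $\Ww$ is not assumed to commute with $R$ (the paper imposes no rotational symmetry on $\Aa$). What actually forces it, as well as $\lr{\phi_2,\Ww\phi_2}=-\te_\star$, is the antiunitary $\sigma=\CCC\III$: since $\Aa$ is real ($\CCC\Ww\CCC^{-1}=-\Ww$) \emph{and} odd ($\III\Ww\III^{-1}=\Ww$), one has $\Ww\sigma=-\sigma\Ww$, and $\phi_2=\sigma\phi_1$ then gives both identities. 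Your parenthetical "(equivalently, $\CCC\Ww\CCC^{-1}=-\Ww$)" is therefore imprecise: the $\CCC$-relation holds for any real $\Aa$; what is also needed is the $\III$-covariance coming from oddness of $\Aa$. Rotational covariance is used only for the $D_x$ matrix elements, e.g.\ $\lr{\phi_j,\eta\cdot D_x\phi_j}=0$ and the extraction of $\nu_\star$ in \eqref{eq:1i}.
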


\begin{proof} 1. We proved an analogous statement in \cite[Lemma 4.3]{Dr0} for different values of the parameters $\xi$ and $z$. Here we require $|\xi-\xi_\star| \leq \epsi_0$ and $z \in \p \Dd\big(\mu_\delta^\pm(\xi),r_\delta(\xi)\big)$ instead of $|\xi-\xi_\star| \leq \delta^{1/3}$ and $z \in \Dd\big(E_\star,\te \cdot r_\delta(\xi)\big)$ for some $\te \in (0,1)$. The same strategy works here. Introduce the $\xi$-dependent family of vector spaces
\begin{equation}
\VVV(\xi) = \C \cdot e^{i\blr{\xi-\xi_\star,x}} \phi_1 \oplus \C \cdot e^{i\blr{\xi-\xi_\star,x}} \phi_2 \subset L^2_\xi.
\end{equation}
We split $L^2_\xi$ as $\VVV(\xi) \oplus \VVV(\xi)^\perp$. With respect to this decomposition, we write $\Pp_{\delta,+}(\xi)$ as a block-by-block operator:
\begin{equation}\label{eq:5p}
\Pp_{\delta,+}(\xi) - z = \matrice{A_\delta(\xi)-z & B_\delta(\xi) \\ C_\delta(\xi) & D_\delta(\xi)-z}.
\end{equation}
Below, we use $\lr{\cdot, \cdot}$ instead of $\lr{\cdot, \cdot}_{L^2_\xi}$ to denote the Hermitian product on $L^2_\xi$.

2. Bounds for the operators $B_\delta(\xi)$ and $C_\delta(\xi)$ were obtained in \cite[(4.8)]{Dr0}:
\begin{equation}\label{eq:1g}
 B_\delta(\xi)  = \OO_{\VVV(\xi)^\perp \rightarrow \VVV(\xi)}\big(r_\delta(\xi)\big), \ \ \ \ C_\delta(\xi) = \OO_{\VVV(\xi) \rightarrow \VVV(\xi)^\perp}\big(r_\delta(\xi)\big).
\end{equation}

3. Step 3 in the proof of \cite[Lemma 4.3]{Dr0} applies here. It uses that $D_0(\xi)$ has no eigenvalues near $E_\star$ and that $D_\delta(\xi)-D_0(\xi) = \OO_{L^2_\xi}(\delta)$. It shows that if \eqref{eq:1f} holds then $D_\delta(\xi) - z$ is invertible from $\VVV(\xi)^\perp \cap H^2_\xi$ to $\VVV(\xi)^\perp$ and 
\begin{equations}\label{eq:1h}
\big( D_\delta(\xi) - z \big)^{-1}  = \OO_{\VVV(\xi)^\perp}(1).
\end{equations}

4. We now study $A_\delta(\xi)-z$. This operator acts on the two-dimensional space $\VVV(\xi)$; its matrix in the basis $\big\{ e^{i\blr{\xi-\xi_\star,x}} \phi_1, e^{i\blr{\xi-\xi_\star,x}} \phi_2 \big\}$ is
\begin{equation}\label{eq:3c}
\matrice{ \blr{ e^{i\blr{\xi-\xi_\star,x}} \phi_1, (\Pp_\delta(\xi)-z)e^{i\blr{\xi-\xi_\star,x}} \phi_1 } &  \blr{ e^{i\blr{\xi-\xi_\star,x}} \phi_1, (\Pp_\delta(\xi)-z)e^{i\blr{\xi-\xi_\star,x}} \phi_2 } \\
\blr{ e^{i\blr{\xi-\xi_\star,x}} \phi_2, (\Pp_\delta(\xi)-z)e^{i\blr{\xi-\xi_\star,x}} \phi_1 } & \blr{ e^{i\blr{\xi-\xi_\star,x}} \phi_2, (\Pp_\delta(\xi)-z)e^{i\blr{\xi-\xi_\star,x}} \phi_2 }}.
\end{equation}
As in \cite[Step 4, Lemma 4.3]{Dr0} the matrix elements in \eqref{eq:3c} are
\begin{equations}
\blr{\phi_j, \big( \Pp_\delta(\xi)-z\big) \phi_k} =  \big( E_\star - |\xi-\xi_\star|^2 -z \big) \delta_{jk} + \blr{\phi_j, \big( \delta \Ww  +  2 (\xi-\xi_\star) \cdot D_x\big) \phi_k}.
\end{equations}
Because of \eqref{eq:1i}, $\blr{\phi_2,  2 (\xi-\xi_\star) \cdot D_x \phi_1} = \nu_\star(\xi-\xi_\star)$; \cite[Lemma 2.1]{Dr0} shows that $\blr{\phi_j,  2 (\xi-\xi_\star) \cdot D_x \phi_j}$ vanishes. Moreover, \cite[Lemma 7.3]{Dr0} shows that $\lr{\phi_2, \Ww \phi_1} = \lr{\phi_1, \Ww \phi_2} = 0$ and $\lr{\phi_1, \Ww \phi_1} = \te_\star = -\lr{\phi_2, \Ww \phi_2}$. We deduce that the matrix \eqref{eq:3c} is equal to $\Mm_\delta(\xi)-z + \OO_{\C^2}(\xi-\xi_\star)^2$. Using a Neumann series argument based on \eqref{eq:3c}, when \eqref{eq:1f} holds, $A_\delta(\xi)-z$ is invertible; and
\begin{equations}\label{eq:4q}
\big( A_\delta(\xi)-z \big)^{-1} =  \Ii_0(\xi)^* \cdot \big( \Mm_\delta(\xi)-z \big)^{-1} \cdot \Ii_0(\xi) + \OO_{\VVV(\xi)}\left( \dfrac{|\xi-\xi_\star|^2}{r_\delta(\xi)^2}\right)
\\
= \Ii_0(\xi)^* \cdot \big( \Mm_\delta(\xi)-z \big)^{-1} \cdot \Ii_0(\xi) + \OO_{\VVV(\xi)}(1).
\end{equations}
Above, $\Ii_0(\xi) : \VVV(\xi) \rightarrow \C^2$ is the coordinate map.
Because of \eqref{eq:1d}, we also get
\begin{equation}\label{eq:4r}
\big( A_\delta(\xi)-z \big)^{-1}  = \OO_{\VVV(\xi)} \left(r_\delta(\xi)^{-1} \right).
\end{equation}

5. Schur's lemma allows to invert block-by-block operators of the form \eqref{eq:5p} under certain conditions on the blocks; see \cite[Lemma 4.1]{DFW} for the version needed here. We checked that $D_\delta(\xi) - z : \VVV(\xi) \rightarrow \VVV(\xi)$ is invertible. 
It remains to check that:
\begin{equations}\label{eq:4o}
A_\delta(\xi) - z - B_\delta(\xi) \cdot \big( D_\delta(\xi)-z \big)^{-1} \cdot C_\delta(\xi) : \VVV(\xi) \rightarrow \VVV(\xi) \ \text{ is invertible}.
\end{equations}
We observe that because of \eqref{eq:1g} and \eqref{eq:1h},
\begin{equation}
B_\delta(\xi) \cdot \big( D_\delta(\xi)-z \big)^{-1} \cdot C_\delta(\xi) = \OO_{\VVV(\xi)}\big(r_\delta(\xi)^2 \big).
\end{equation}
Therefore a Neumann series argument based on \eqref{eq:4r} shows that \eqref{eq:4o} holds. Thanks to \eqref{eq:4q}, it also shows that the inverse is equal to
\begin{equation}
\big( A_\delta(\xi) - z \big)^{-1} + \OO_{\VVV(\xi)}(1)
= \Ii_0(\xi)^* \cdot \big( \Mm_\delta(\xi)-z \big)^{-1} \cdot \Ii_0(\xi) + \OO_{\VVV(\xi)}(1).
\end{equation}

We apply Schur's lemma. From \eqref{eq:5p}, we obtain that $\Pp_\delta(\xi) - z : H^2_\xi \rightarrow L^2_\xi$ is invertible when \eqref{eq:1f} holds; and moreover
\begin{equations}
\big(\Pp_\delta(\xi) - z\big)^{-1} = \matrice{\Ii_0(\xi)^* \cdot \big( \Mm_\delta(\xi)-z \big)^{-1} \cdot \Ii_0(\xi) & 0 \\ 0 & 0} + \OO_{L^2_\xi}(1) 
\\
=\Jj_0(\xi)^* \cdot \big( \Mm_\delta(\xi)-z \big)^{-1} \cdot \Jj_0(\xi) + \OO_{L^2_\xi}(1).
\end{equations}
This completes the proof.
\end{proof}

\begin{lem}\label{lem:1b} If \eqref{eq:2a} holds, then $\te_\star \neq 0$ and $\delta_\sharp > 0$ for a generic choice of $\Ww$.
\end{lem}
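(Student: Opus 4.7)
The lemma bundles two distinct claims, and I would address them in sequence.

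\textbf{Genericity of $\te_\star \neq 0$.} The assignment $\Ww \mapsto \te_\star = \lr{\phi_1, \Ww\phi_1}_{L^2_{\xi_\star}}$ is continuous and $\R$-linear on the Fr\'echet space of admissible perturbations, so its zero set is a closed linear subspace, either the whole space or a proper nowhere-dense one. Genericity in the Baire sense then reduces to exhibiting one admissible $\Aa$ with $\te_\star \neq 0$. An integration by parts gives
\[
\te_\star = 2\int_{\Ll} \Aa(x) \cdot J_1(x)\, dx, \qquad J_1(x) \de \Im\bigl(\ove{\phi_1(x)}\nabla\phi_1(x)\bigr),
\]
so $\te_\star$ is the real $L^2(\Ll,\R^2)$-pairing of $\Aa$ with the smooth $\Lambda$-periodic probability current $J_1$ of $\phi_1$. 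Because $\Aa$ is odd, only the odd part $J_1^{\mathrm{odd}}(x) = \tfrac12(J_1(x) - J_1(-x))$ contributes; setting $\Aa = J_1^{\mathrm{odd}}$ (which is admissible) gives $\te_\star = 2\|J_1^{\mathrm{odd}}\|_{L^2}^2$. I would rule out $J_1^{\mathrm{odd}} \equiv 0$ by exploiting the non-trivial equivariance $\phi_1(Rx) = e^{2i\pi/3}\phi_1(x)$: a symmetry-averaging argument shows that the vanishing of $J_1^{\mathrm{odd}}$ would force $\phi_1$ to lie in a rotationally degenerate eigenspace incompatible with the Dirac-point structure recalled in Definition \ref{def:1}.

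\textbf{Positivity of $\delta_\sharp$.} Assume $\te_\star \neq 0$. I would partition $\Tt^2$ according to $\rho(\xi) = \dist(\xi,\{\xi_\star^A,\xi_\star^B\} + 2\pi\Lambda^*)$ at the scale $\epsi_0 > 0$ provided by Lemma \ref{lem:1d}. On the compact set $K = \{\rho(\xi) \geq \epsi_0\}$, the gap $\lambda_{0,n+1}(\xi) - \lambda_{0,n}(\xi)$ is continuous and pointwise positive by \eqref{eq:4c}, hence uniformly bounded below by some $c > 0$. Since $\Ww$ is $(-\Delta)$-bounded with vanishing relative bound, analytic perturbation theory (\cite[Ch.~VII]{Ka}) yields joint continuity of $\lambda_{\delta,j}(\xi)$ in $(\delta,\xi)$, so the gap persists of size $\geq c/2$ on $K$ for every $\delta \in [0,\delta_1]$ with some $\delta_1 > 0$.

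For $|\xi - \xi_\star| \leq \epsi_0$ and $\delta \leq \delta_0$ (the case near $-\xi_\star$ is symmetric via spatial inversion, under which $\Ww$ is invariant), I would form the Riesz projectors $\Pi_\delta^\pm(\xi) = \tfrac{1}{2\pi i}\oint_{\p\Dd(\mu_\delta^\pm(\xi),\, r_\delta(\xi))} (z - \Pp_{\delta,+}(\xi))^{-1}\, dz$. By the resolvent expansion of Lemma \ref{lem:1d}, each $\Pi_\delta^\pm(\xi)$ equals the corresponding rank-one spectral projector of $\Mm_\delta(\xi)$ (lifted via $\Jj_0(\xi)^*$) plus an operator of norm $\OO(r_\delta(\xi))$, hence is itself rank one for $r_\delta$ small. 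Therefore $\Pp_{\delta,+}(\xi)$ has exactly one eigenvalue $\lambda_\pm(\xi)$ in each closed disk $\ove{\Dd(\mu_\delta^\pm(\xi),r_\delta(\xi))}$. These disks meet only at $E_\star$, so $\lambda_+ = \lambda_-$ would force both eigenvalues to equal $E_\star$; a Schur-complement refinement of the proof of Lemma \ref{lem:1d} delivers $\lambda_\pm(\xi) = \mu_\delta^\pm(\xi) + o(r_\delta(\xi))$, ruling this out. Together with $r_\delta(\xi) \geq |\te_\star|\delta$, this gives a gap of size at least $|\te_\star|\delta > 0$ near each Dirac momentum, and combining both regions yields $\delta_\sharp \geq \min(\delta_0,\delta_1) > 0$.

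\textbf{Main obstacle.} The delicate point is the sharp eigenvalue separation near the Dirac momenta: Lemma \ref{lem:1d} controls the resolvent only up to an $\OO_{L^2_\xi}(1)$ remainder, whereas the gap argument requires eigenvalue shifts of order $o(r_\delta)$. I expect this sharper asymptotic to come from iterating the Schur-complement identity from the proof of Lemma \ref{lem:1d} once more, using the bounds $B_\delta(\xi), C_\delta(\xi) = \OO(r_\delta)$ on the off-diagonal blocks to absorb the $O(|\xi-\xi_\star|^2)$ correction in the effective two-band matrix. The genericity claim is comparatively easy once the current-pairing identity for $\te_\star$ is in hand.
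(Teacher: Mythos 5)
Your overall strategy mirrors the paper's: rewrite $\te_\star$ as a pairing of $\Aa$ with a current built from $\phi_1$ to get genericity, then combine compactness away from the Dirac momenta with the two-band resolvent expansion of Lemma~\ref{lem:1d} to get $\delta_\sharp > 0$. Two of your steps, however, are left unfinished, and the paper closes them in specific ways worth knowing.

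For the non-vanishing of the current you invoke a ``symmetry-averaging argument'' using the $2\pi/3$-rotation equivariance of $\phi_1$; this is never carried out, and it is the load-bearing step of the genericity claim. The paper's argument is more elementary and makes no use of the rotation at all. One writes $\ove{\phi_1}D_x\phi_1 - (D_x\ove{\phi_1})\phi_1 = \ove{\phi_1}^2 \, D_x\!\left(\phi_1/\ove{\phi_1}\right)$; if this expression vanished on an open set then $\phi_1$ and $\ove{\phi_1}$ would be proportional there, and unique continuation (applied to $\phi_1 - c\,\ove{\phi_1}$, which solves the same Schr\"odinger equation since $\Vv$ and $E_\star$ are real) makes that proportionality global. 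This is impossible because $\phi_1 \in L^2_{\xi_\star}$, $\ove{\phi_1} \in L^2_{-\xi_\star}$, and $L^2_{\xi_\star}\cap L^2_{-\xi_\star} = \{0\}$. The Bloch quasi-periodicity alone does the work; no appeal to the Dirac-point structure is needed.

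You correctly flag the second gap: the remainder in Lemma~\ref{lem:1d} is only $\OO_{L^2_\xi}(1)$, which after integrating over a contour of length $O(r_\delta)$ gives $\Pi_\delta^\pm(\xi) = \pi_\delta^\pm(\xi) + \OO_{L^2_\xi}(r_\delta)$, not directly the eigenvalue separation needed. Your proposed fix (a further iteration of the Schur complement to push the error to $o(r_\delta)$) may well work but is not what the paper does, and is the harder route. The paper instead exploits a cancellation in the contour integral for $\tfrac{1}{2\pi i}\oint_{\gamma_\delta^-(\xi)} z\,(z-\Pp_{\delta,+}(\xi))^{-1}\,dz$: writing $z = \mu_\delta^-(\xi) + (z - \mu_\delta^-(\xi))$, the $\OO(1)$ resolvent error contributes $O(r_\delta^2)$ to the $(z-\mu_\delta^-)$-part because $|z-\mu_\delta^-| = r_\delta$ on a contour of length $O(r_\delta)$, while the $\mu_\delta^-$-part recombines with the leading term to produce $\mu_\delta^-(\xi)\,\Pi_\delta^-(\xi)$. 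This yields $(\lambda_{\delta,n}(\xi) - \mu_\delta^-(\xi))\,\Pi_\delta^-(\xi) = \OO_{L^2_\xi}(r_\delta(\xi)^2)$; taking the trace and using that $\Pi_\delta^-(\xi)$ has rank one gives \eqref{eq:8c}, the quadratic refinement you need, with no extra Schur iteration. The paper then concludes $\delta_\sharp > 0$ by contradiction (a sequence $(\xi_k,\delta_k)$ with band crossing must accumulate at a Dirac momentum by \eqref{eq:2a}, where \eqref{eq:8c} forbids a crossing), which is a cleaner bookkeeping than your two-region patch but is otherwise equivalent.
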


\begin{proof}  1. Recall that $\te_\star = \lr{\phi_1,\Ww\phi_1}_{L^2_{\xi_\star}} $ and observe that
\begin{equation}\label{eq:1v}
\te_\star =  \lr{\phi_1,\Aa D_x\phi_1}_{L^2_{\xi_\star}} + \lr{D_x\phi_1,\Aa \phi_1}_{L^2_{\xi_\star}}  = \blr{\ove{\phi_1} \cdot D_x\phi_1-(D_x\ove{\phi_1}) \cdot \phi_1,\Aa }_{L^2_{\xi_\star}}.
\end{equation}
Because of the unique continuation principle for elliptic problems -- see \cite[Theorem 17.2.6]{Ho} -- $\phi_1$ cannot vanish on an open set. We deduce that if 
\begin{equation}
\ove{\phi_1} D_x\phi_1-D_x\ove{\phi_1}\phi_1 = \ove{\phi_1}^2 D_x \left(\frac{\phi_1}{\ove{\phi_1}}\right)
\end{equation}
vanishes uniformly, then $\phi_1$ and $\ove{\phi_1}$ are linearly dependent. This is impossible because $\ove{\phi_1} \in L^2_{-\xi_\star}$ and $L^2_{\xi_\star} \cap L^2_{-\xi_\star} = \{0\}$. We deduce that the condition $\te_\star \neq 0$ is equivalent to requiring that $\Aa$ does not lie in the hyperplane normal to $\ove{\phi_1} \cdot D_x\phi_1-(D_x\ove{\phi_1}) \cdot \phi_1$. This is a generic condition.

2. Define $\gamma_\delta^\pm(\xi) = \p \Dd\big(\mu_\delta^\pm(\xi),r_\delta(\xi)\big)$. Lemma \ref{lem:1d} implies that
\begin{equations}
\Pi_\delta^\pm(\xi) \de \dfrac{1}{2\pi i}\oint_{\gamma_\delta^\pm(\xi)} \big(z - \Pp_{\delta,+}(\xi) \big)^{-1} \cdot dz 
\\
= \dfrac{1}{2\pi i}\oint_{\gamma_\delta^\pm(\xi)} \Jj_0(\xi)^*  \cdot \big( z- \Mm_\delta(\xi) \big)^{-1} \cdot  \Jj_0(\xi) \cdot dz  + \OO_{L^2_\xi}\big(r_\delta(\xi)^2\big) \de \pi_\delta^\pm(\xi) + \OO_{L^2_\xi}\big(r_\delta(\xi)^2\big).
\end{equations}
Because of the spectral theorem, $\Pi_\delta^\pm(\xi)$ is a projector. If $f_1, f_2$ are normalized elements in the range of $\Pi_\delta(\xi)$ then
\begin{equation}
f_1 = \Pi_\delta^\pm(\xi) f_1 = \pi_\delta(\xi) f_1 + O_{L^2_\xi}\big(r_\delta(\xi)^2\big).
\end{equation}
Since $\pi_\delta(\xi)$ is a rank-one projector -- see \eqref{eq:1b} -- $f_1$ and $f_2$ cannot be orthogonal. We deduce that $\Pi_\delta^\pm(\xi)$ has rank one.  In other words  $\Pp_{\delta,+}(\xi)$ has precisely one eigenvalue in each disk $\Dd\big(\mu_\delta^\pm(\xi),r_\delta(\xi)\big)$ for $(\xi,\delta)$ close enough to $(\xi_\star,0)$. Because of \cite[Appendix A.1]{FW2}, these two eigenvalues must be $\lambda_{\delta,n}(\xi)$ and $\lambda_{\delta,n+1}(\xi)$. 
We deduce that
\begin{equations}
 \lambda_{\delta,n}(\xi) \cdot \Pi_\delta^-(\xi)  = \dfrac{1}{2\pi i}\oint_{\gamma_\delta^-(\xi)} z \cdot \big(z - \Pp_{\delta,+}(\xi) \big)^{-1} \cdot dz 
\\
 =   \Jj_0(\xi)^* \cdot \dfrac{1}{2\pi i}\oint_{\gamma_\delta^-(\xi)}  z \cdot \big( z- \Mm_\delta(\xi) \big)^{-1}  \cdot dz\cdot \Jj_0(\xi) + \OO_{L^2_\xi}\big(r_\delta(\xi)^2\big)
 \\
 =\mu_\delta^-(\xi) \cdot \Jj_0(\xi)^* \pi_\delta(\xi) \Jj_0(\xi) + \OO_{L^2_\xi}\big(r_\delta(\xi)^2\big).
\end{equations}
A similar identity holds for $\lambda_{\delta,n}(\xi) \cdot \Pi_\delta^+(\xi)$. Taking the trace, we deduce that
\begin{equations}\label{eq:8c}
\lambda_{\delta,n}(\xi) = \mu_\delta^-(\xi) + O\big( r_\delta(\xi)^2\big) = E_\star - r_\delta(\xi) + O\big( r_\delta(\xi)^2\big), 
\\
\lambda_{\delta,n+1}(\xi) = \mu_\delta^+(\xi) + O\big( r_\delta(\xi)^2\big) = E_\star + r_\delta(\xi) + O\big( r_\delta(\xi)^2\big).
\end{equations}

3. Assume that $\delta_\sharp = 0$. Then for any $k \in \N$, there exist $\xi_k \in \Ll^*$ and $0 < \delta_k \rightarrow 0$ as $k \rightarrow \infty$, with $\lambda_{\delta_k,n}(\xi_k) = \lambda_{\delta_k,n+1}(\xi_k)$. 
After passing to a subsequence, we can assume that $\xi_k$ converges to a point$\rightarrow \xi_\infty$. Because of \cite[Appendix A.1]{FW2}, $\lambda_{\delta_k,n}(\xi_k) \rightarrow \lambda_{0,n}(\xi_\infty)$ and $\lambda_{\delta_k,n+1}(\xi_k) \rightarrow \lambda_{0,n+1}(\xi_\infty)$. It follows that $\lambda_{0,n}(\xi_\infty) = \lambda_{0,n+1}(\xi_\infty)$. We deduce from \eqref{eq:2a} that $\xi_\infty \in \{\xi_\star^A,\xi_\star^B\}$; \eqref{eq:1b} and  \eqref{eq:8c} yield
\begin{equation}
E_\star + r_{\delta_k}(\xi_k) + O\big(r_{\delta_k}(\xi)^2\big) = E_\star - r_{\delta_k}(\xi_k) + O\big(r_{\delta_k}(\xi)^2\big).
\end{equation}
This is not possible unless $\delta_k = 0$ for $k$ large enough, which contradicts $\delta_k > 0$. We conclude that $\delta_\sharp > 0$.
\end{proof}

Let $\xi \in \R^2 \mapsto \BB_\delta(\xi)$ be the trace of the Berry curvature associated to the line bundle with fiber $\ker_{\C^2}\big( \Mm_\delta(\xi) - \mu_\delta^-(\xi) \big)$ over $\R^2$.

\begin{lem}\label{lem:1c} There exist $\delta_0 > 0$ and $\epsi_0 > 0$ such that
\begin{equation}
\delta \in (0,\delta_0), \ \ |\xi - \xi_\star| < \epsi_0 \ \ \Rightarrow \ \ \Bb_\delta(\xi) =  \BB_\delta(\xi) + O\big(r_\delta(\xi)^{-1}\big).
\end{equation}
\end{lem}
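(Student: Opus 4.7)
The plan is to isolate the contribution of the top band (the one that degenerates with $\lambda_{0,n+1}$ at $\xi_\star$) and use the resolvent expansion of Lemma \ref{lem:1d} both at the level of the projector and at the level of its covariant derivatives. First write $\Pi_\delta(\xi) = \Pi^-_\delta(\xi) + \Pi^{\text{low}}_\delta(\xi)$, where $\Pi^-_\delta$ is the spectral projector of $\Pp_{\delta,+}(\xi)$ onto $\lambda_{\delta,n}(\xi)$ and $\Pi^{\text{low}}_\delta$ projects onto $\lambda_{\delta,1}(\xi),\dots,\lambda_{\delta,n-1}(\xi)$. Using the identity $P(\p P)P = 0$ for any orthogonal projector, together with cyclicity of the trace, the trace of the Berry curvature of a direct sum of orthogonal spectral sub-bundles decomposes as $\Bb_\delta = \Bb^-_\delta + \Bb^{\text{low}}_\delta$. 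Since $\lambda_{0,n-1}(\xi_\star) < E_\star$, the lower bands stay uniformly gapped from $\lambda_{\delta,n}$ on a neighborhood of $(0,\xi_\star)$, so by \cite[\S VII.1.3, Theorem 1.7]{Ka} the family $e^{-i\lr{\xi,x}}\Pi^{\text{low}}_\delta(\xi)e^{i\lr{\xi,x}}$ is jointly smooth in $(\delta,\xi)$ with uniformly bounded derivatives, yielding $\Bb^{\text{low}}_\delta(\xi) = O(1) = O(r_\delta(\xi)^{-1})$.

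Next, represent $\Pi^-_\delta(\xi)$ as the Cauchy integral of $(z-\Pp_{\delta,+}(\xi))^{-1}$ over $\gamma^-_\delta(\xi) = \p\Dd(\mu_\delta^-(\xi),r_\delta(\xi))$. Substituting Lemma \ref{lem:1d} and using that the contour has length of order $r_\delta(\xi)$,
\begin{equation}
\Pi^-_\delta(\xi) = \Jj_0(\xi)^* \pi_\delta(\xi) \Jj_0(\xi) + \OO_{L^2_\xi}\big(r_\delta(\xi)\big),
\end{equation}
with $\pi_\delta(\xi) \de \frac{1}{2\pi i}\oint_{\gamma^-_\delta(\xi)}(z-\Mm_\delta(\xi))^{-1}\,dz$ the rank-one spectral projector of $\Mm_\delta(\xi)$ onto $\mu_\delta^-(\xi)$. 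To differentiate, pass to the conjugated picture on $L^2_0$ (where $\widetilde{\Jj}_0$ becomes $\xi$-independent), apply $\p_{\xi_m}$ inside the integral, and note that the $\xi$-dependence of the contour contributes nothing since a small shift in $\xi$ is a holomorphic deformation of $\gamma^-_\delta(\xi)$ that does not cross any eigenvalue. Applying Lemma \ref{lem:1d}'s expansion to each resolvent in
\begin{equation}
\p_{\xi_m}\widetilde{\Pi}^-_\delta(\xi) = \frac{1}{2\pi i}\oint_{\gamma^-_\delta(\xi)} (z-\widetilde{\Pp}_{\delta,+}(\xi))^{-1}\, \p_{\xi_m}\widetilde{\Pp}_{\delta,+}(\xi)\, (z-\widetilde{\Pp}_{\delta,+}(\xi))^{-1}\,dz,
\end{equation}
the reduction $\widetilde{\Jj}_0\,\p_{\xi_m}\widetilde{\Pp}_{\delta,+}(\xi)\,\widetilde{\Jj}_0^*$ is computed from the matrix elements used in Step 4 of the proof of Lemma \ref{lem:1d}, together with \eqref{eq:1i}: it equals $\p_{\xi_m}\Mm_\delta(\xi) + 2(\xi-\xi_\star)_m\Id + O(\delta)$. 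The scalar-identity piece contributes $\oint(z-\Mm_\delta(\xi))^{-2}\,dz = 0$ (no residue at a double pole inside, no pole outside), while the $O(\delta)$ piece contributes $O(\delta \cdot r_\delta^{-1}) = O(1)$; re-assembling,
\begin{equation}
\nabla_{\xi_m} \Pi^-_\delta(\xi) = \Jj_0(\xi)^*\, \p_{\xi_m}\pi_\delta(\xi)\, \Jj_0(\xi) + \OO_{L^2_\xi}(1).
\end{equation}

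Finally, assemble the Berry curvature. The isometry $\widetilde{\Jj}_0\widetilde{\Jj}_0^* = \Id_{\C^2}$ and cyclicity of the trace collapse
\begin{equation}
\Tr\!\Big(\Jj_0^* \pi_\delta \Jj_0 \big[\Jj_0^* \p_{\xi_1}\pi_\delta \Jj_0,\, \Jj_0^* \p_{\xi_2}\pi_\delta \Jj_0\big]\Big) = \Tr\!\Big(\pi_\delta [\p_{\xi_1}\pi_\delta,\, \p_{\xi_2}\pi_\delta]\Big) = \BB_\delta(\xi).
\end{equation}
Expanding $\Bb^-_\delta(\xi) = \Tr(\Pi^-_\delta[\nabla_{\xi_1}\Pi^-_\delta,\nabla_{\xi_2}\Pi^-_\delta])$ with the bounds $\Pi^-_\delta = O(1)$ (error $O(r_\delta)$) and $\nabla_{\xi_m}\Pi^-_\delta = O(r_\delta^{-1})$ (error $O(1)$), all cross terms have bounded rank and size at most $O(r_\delta(\xi)^{-1})$. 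Adding the contribution of the first paragraph yields $\Bb_\delta(\xi) = \BB_\delta(\xi) + O(r_\delta(\xi)^{-1})$.

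The technical crux is the derivative-level refinement of Lemma \ref{lem:1d}: one must handle the first-order differential operator $\p_{\xi_m}\widetilde{\Pp}_{\delta,+}(\xi)$, which is unbounded on $L^2_0$, within the Schur decomposition $\VVV(\xi)\oplus\VVV(\xi)^\perp$ of the original proof, and verify that its reduction to the model space reproduces $\p_{\xi_m}\Mm_\delta(\xi)$ modulo a scalar matrix that cancels under the contour integral. All remaining work is a careful bookkeeping of rank and size of error terms.
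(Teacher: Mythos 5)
Your proposal mirrors the paper's proof: decompose $\Pi_\delta = \Pi^-_\delta + Q_\delta$ with the lower-band projector $Q_\delta$ jointly smooth near $(0,\xi_\star)$ and contributing $O(1)$; represent $\Pi^-_\delta$ by a Cauchy integral over $\gamma^-_\delta(\xi)$; substitute the resolvent expansion of Lemma~\ref{lem:1d}; differentiate the Cauchy integral and compute $\Jj_0(\xi)\,2D_{x_m}\,\Jj_0(\xi)^*$ from the Step~4 matrix elements and \eqref{eq:1i}; then take traces using the finite-rank structure. The one cosmetic difference is how you dispose of the scalar correction in the reduced derivative: you argue it contributes nothing because $\oint (z-\Mm_\delta(\xi))^{-2}\,dz = 0$, whereas the paper simply absorbs it into an $O(\xi-\xi_\star)$ error term and notes $|\xi-\xi_\star| \lesssim r_\delta(\xi)$, giving the same $\OO_{L^2_\xi}(1)$ bound. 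Your closing caveat about the unboundedness of $\p_{\xi_m}\widetilde{\Pp}_{\delta,+}(\xi)$ is fair; the paper does not address it explicitly either, and both arguments implicitly rely on the resolvent and its remainder in Lemma~\ref{lem:1d} being controlled as maps $L^2_\xi \to H^1_\xi$ (via elliptic regularity from the $L^2_\xi\to L^2_\xi$ bounds), so that sandwiching $2D_{x_m}$ between them is harmless.
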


\begin{proof} 1. Let $Q_\delta(\xi) : L^2_\xi \rightarrow L^2_\xi$ the projector on 
\begin{equation}
\bigoplus_{j=1}^{n-1} \ker_{L^2_\xi}\big(\Pp_{\delta,+}(\xi)-\lambda_{\delta,j}(\xi) \big).
\end{equation}
The eigenvalues $\lambda_{0,1}(\xi_\star), \dots, \lambda_{0,n-1}(\xi_\star)$ of $\Pp_0(\xi_\star)$ are separated from the rest of the spectrum of $\Pp_0(\xi_\star)$ because $E_\star = \lambda_{0,n}(\xi_\star) = \lambda_{0,n+1}(\xi_\star)$ has multiplicity precisely $2$. Because of \cite[\S VIII.1.3 Theorem 1.7]{Ka}, the family $(\delta,\xi) \mapsto Q_\delta(\xi)$ is smooth on a neighborhood of $(0,\xi_\star)$. In particular, under these conditions, 
\begin{equation}
\Tr\Big( Q_\delta(\xi) \big[ \nabla_{\xi_1}Q_\delta(\xi), \nabla_{\xi_2}Q_\delta(\xi)\big] \Big) = O(1).
\end{equation}

For $\delta > 0$, let $\Pi_\delta^-(\xi)$ be the projector on $\ker_{L^2_\xi}\big(\Pp_{\delta,+}(\xi)-\lambda_{\delta,n}(\xi) \big)$. Because of \eqref{eq:8c}, for $(\delta,\xi)$ near $(0,\xi_\star)$, $\lambda_{\delta,n}(\xi)$ is a simple eigenvalue of $\Pp_{\delta,+}(\xi)$. Therefore for $(\delta,\xi)$ near $(0,\xi_\star)$ the projector $\Pi_\delta(\xi)$ splits orthogonally as
\begin{equation}\label{eq:1w}
\Pi_\delta(\xi) = \Pi_\delta^-(\xi) + Q_\delta(\xi).
\end{equation}

We recall that $\Bb_\delta(\xi)$ is gauge independent -- i.e. it does not depend on the choice of frame in \eqref{eq:3h}. Pick a frame in \eqref{eq:3h} associated to the orthogonal decomposition \eqref{eq:1w}. The associated Berry connection and curvature split accordingly to components for $\Pi_\delta^-(\xi)$ and $Q_\delta(\xi)$. In other words, the curvature endomorphism is a two-form valued  diagonal by block matrix, with respect to the decomposition \eqref{eq:1w}.  Therefore,
\begin{equations}\label{eq:1j}
\Bb_\delta(\xi) = \Tr\Big( Q_\delta(\xi) \big[ \nabla_{\xi_1}Q_\delta(\xi), \nabla_{\xi_2}Q_\delta(\xi)\big] \Big) + \Tr\Big( \Pi_\delta^-(\xi) \big[ \nabla_{\xi_1}\Pi_\delta^-(\xi), \nabla_{\xi_2}\Pi_\delta^-(\xi)\big] \Big) 
\\
= \Tr\Big( \Pi_\delta^-(\xi) \big[ \nabla_{\xi_1}\Pi_\delta^-(\xi), \nabla_{\xi_2}\Pi_\delta^-(\xi)\big] \Big) + O(1).
\end{equations}

2. Recall that $\gamma_\delta^-(\xi) = \p\Dd\big(\mu_\delta^-(\xi),r_\delta(\xi)\big)$ oriented clockwise. When $(\xi,\delta)$ is sufficiently close to $(\xi_\star,0)$, we saw in Step 2 of the proof of Lemma \ref{lem:1b} that $\Pp_{\delta,+}(\xi)$ has a unique eigenvalue in $\Dd\big(\mu_\delta^\pm(\xi), r_\delta(\xi)\big)$, which is $\lambda_{\delta,n}(\xi)$. The Cauchy formula yields
\begin{equations}\label{eq:3a}
\Pi_\delta^-(\xi) = \dfrac{1}{2\pi i}\oint_{\gamma^-_\delta(\xi)} \big(z - \Pp_{\delta,+}(\xi) \big)^{-1} dz .
\end{equations}
Let $\pi^-_\delta(\xi)$ be the projector on the eigenvalue $\mu_\delta^-(\xi)$ of $\Mm_\delta(\xi)$.
We use Lemma \ref{lem:1d} and that $\gamma^-_\delta(\xi)$ has length $O\big(r_\delta(\xi)\big)$ to get 
\begin{equations}
\Pi_\delta^-(\xi) = \Jj_0(\xi)^* \cdot \dfrac{1}{2\pi i}\oint_{\gamma^-_\delta(\xi)} \big(z -  \Mm_\delta(\xi) \big)^{-1} dz \cdot \Jj_0(\xi) + \OO_{L^2_\xi}\big(r_\delta(\xi)\big) \\
 = \Jj_0(\xi)^* \cdot \pi^-_\delta(\xi) \cdot \Jj_0(\xi) + \OO_{L^2_\xi}\big(r_\delta(\xi)\big).
\end{equations}

3. We study $\nabla_{\xi_1} \Pi^-_\delta(\xi)$ -- defined in \eqref{eq:3i}:
\begin{equations}\label{eq:2k}
\nabla_{\xi_1} \Pi^-_\delta(\xi) \de e^{i\lr{\xi,x}} \cdot \dd{\left(e^{-i\lr{\xi,x}} \Pi^-_\delta(\xi) e^{i\lr{\xi,x}}\right) }{\xi_1} \cdot e^{-i\lr{\xi,x}} \ : \  L^2_\xi \rightarrow L^2_\xi.
\end{equations} 
The projector $e^{-i\lr{\xi,x}} \Pi^-_\delta(\xi) e^{i\lr{\xi,x}}$ is associated to $e^{-i\lr{\xi,x}} \Pp_\delta(\xi) e^{i\lr{\xi,x}}$ instead of $\Pp_\delta(\xi)$. The same Cauchy formula as \eqref{eq:3a} gives
\begin{equation}\label{eq:2j}
e^{-i\lr{\xi,x}} \Pi^-_\delta(\xi) e^{i\lr{\xi,x}} = \dfrac{1}{2\pi i} \oint_{\gamma^-_\delta(\xi)} \left( z - e^{-i\lr{\xi,x}} \Pp_{\delta,+}(\xi) e^{i\lr{\xi,x}} \right)^{-1} dz.
\end{equation}
Observe that $e^{-i\lr{\xi,x}} \Pp_\delta(\xi) e^{i\lr{\xi,x}} = (D_x+\xi)^2 + \Vv + \delta \Ww$ . Hence, 
\begin{equation}
\dd{ \left(e^{-i\lr{\xi,x}} \Pp_{\delta,+}(\xi) e^{i\lr{\xi,x}} \right)}{\xi_1} = 2(D_{x_1}+\xi_1) = 2e^{-i\lr{\xi,x}} D_{x_1} e^{i\lr{\xi,x}}.
\end{equation}

We use \cite[Lemma A.6]{Dr2} -- a result to differentiate Cauchy integrals when the contour depends on the parameter -- to differentiate \eqref{eq:2j} w.r.t. $\xi_1$. We get
\begin{equations}
\dfrac{1}{2\pi i} \oint_{\gamma^-_\delta(\xi)}  \left( z - e^{-i\lr{\xi,x}} \Pp_{\delta,+}(\xi) e^{i\lr{\xi,x}} \right)^{-1}  \dd{ \left(e^{-i\lr{\xi,x}} \Pp_{\delta,+}(\xi) e^{i\lr{\xi,x}} \right)}{\xi_1} \left( z - e^{-i\lr{\xi,x}} \Pp_{\delta,+}(\xi) e^{i\lr{\xi,x}} \right)^{-1} dz
\\
= \dfrac{1}{2\pi i} \oint_{\gamma^-_\delta(\xi)}   \left( z - e^{-i\lr{\xi,x}} \Pp_{\delta,+}(\xi) e^{i\lr{\xi,x}} \right)^{-1}  \cdot 2 e^{-i\lr{\xi,x}} D_{x_1} e^{i\lr{\xi,x}} \cdot \left( z - e^{-i\lr{\xi,x}} \Pp_{\delta,+}(\xi) e^{i\lr{\xi,x}} \right)^{-1} dz
\\
= e^{-i \lr{\xi,x}} \cdot \dfrac{1}{2\pi i} \oint_{\gamma^-_\delta(\xi)}   \big( z - \Pp_{\delta,+}(\xi)\big)^{-1}  \cdot 2D_{x_1} \cdot \big( z -  \Pp_{\delta,+}(\xi)  \big)^{-1} dz \cdot e^{i \lr{\xi,x}}.
\end{equations}
We deduce from \eqref{eq:2k} that
\begin{equations}\label{eq:3b}
\nabla_{\xi_1} \Pi^-_\delta(\xi) = \dfrac{1}{2\pi i} \oint_{\gamma^-_\delta(\xi)}   \big( z - \Pp_{\delta,+}(\xi)\big)^{-1}  \cdot 2D_{x_1} \cdot \big( z -  \Pp_{\delta,+}(\xi)  \big)^{-1} dz.
\end{equations} 

We recall that $\big(z- \Mm_\delta(\xi) \big)^{-1} = \OO_{\C^2}\big(r_\delta(\xi)^{-1}\big)$ when $z \in \gamma^-_\delta(\xi)$. 
Because of Lemma \ref{lem:1d}, $(z - \Pp_{\delta,+}(\xi))^{-1} = \OO_{L^2_\xi}\big(r_\delta(\xi)^{-1}\big)$ when $z \in \gamma^-_\delta(\xi)$. Since the contour $\gamma^-_\delta(\xi)$ has length $\OO\big(r_\delta(\xi)\big)$, we deduce from \eqref{eq:3b} that $\nabla_{\xi_1} \Pi_\delta^-(\xi) = \OO_{L^2_\xi}\big(r_\delta(\xi)^{-1}\big)$. Moreover, Lemma \ref{lem:1d} combined with \eqref{eq:3b} shows that  $\nabla_{\xi_1} \Pi_\delta^-(\xi)$ equals
\begin{equations}
\Jj_0(\xi)^* \cdot \dfrac{1}{2\pi i}\oint_{\gamma^-_\delta(\xi)} \big(z- \Mm_\delta(\xi) \big)^{-1} \cdot \Jj_0(\xi)    2D_{x_1}   \Jj_0(\xi)^* \cdot \big(z- \Mm_\delta(\xi) \big)^{-1} dz \cdot \Jj_0(\xi) + \OO_{L^2_\xi}(1).
\end{equations}
As in Step 4 in the proof of Lemma \ref{lem:1d}, 
\begin{equation}
\Jj_0(\xi)   2D_{x_1}   \Jj_0(\xi)^* =   \matrice{0 & \nu_\star \\ \ove{\nu_\star} & 0} + \OO_{\C^2}(\xi-\xi_\star) = \dd{\Mm_\delta(\xi)}{\xi_1}+ \OO_{\C^2}(\xi-\xi_\star).
\end{equation}
We use $\big(z- \Mm_\delta(\xi) \big)^{-1} = \OO_{\C^2}\big(r_\delta(\xi)^{-1}\big)$ when $z \in \gamma^-_\delta(\xi)$; $\gamma^-_\delta(\xi)$ has length $O\big(r_\delta(\xi)\big)$; $|\xi-\xi_\star| \cdot r_\delta(\xi)^{-1} = O(1)$; to deduce that $\nabla_{\xi_1} \Pi_\delta^-(\xi)$ equals, modulo $OO_{L^2_\xi}(1)$,
\begin{equations}
 \Jj_0(\xi)^* \cdot \dfrac{1}{2\pi i}\oint_{\gamma^-_\delta(\xi)} \big(z- \Mm_\delta(\xi) \big)^{-1} \dd{\Mm_\delta(\xi)}{\xi_1} \big(z- \Mm_\delta(\xi) \big)^{-1} dz \cdot \Jj_0(\xi) 
\\= \Jj_0(\xi)^* \cdot \dd{}{\xi_1}\left( \dfrac{1}{2\pi i}\oint_{\gamma^-_\delta(\xi)} \big(z- \Mm_\delta(\xi) \big)^{-1}  dz \right) \cdot \Jj_0(\xi) + = \Jj_0(\xi)^* \cdot \dd{\pi^-_\delta(\xi)}{\xi_1} \cdot \Jj_0(\xi).
\end{equations}
A similar calculation leads to
\begin{equation}
\dd{\Pi_\delta^-(\xi)}{\xi_2} = \OO_{L^2_\xi}\big(r_\delta(\xi)^{-1}\big), \ \ \ \ \dd{\Pi_\delta^-(\xi)}{\xi_2} = \Jj_0(\xi)^* \cdot \dd{\pi^-_\delta(\xi)}{\xi_2} \cdot \Jj_0(\xi) + \OO_{L^2_\xi}(1).
\end{equation}

4. We conclude that
\begin{equations}\label{eq:8a}
\Pi_\delta^-(\xi) \big[ \nabla_{\xi_1}\Pi_\delta^-(\xi), \nabla_{\xi_2}\Pi_\delta^-(\xi)\big]
\\
 = \Jj_0(\xi)^* \cdot \pi^-_\delta(\xi) \big[ \p_{\xi_1}\pi^-_\delta(\xi), \p_{\xi_2}\pi^-_\delta(\xi)\big] \cdot \Jj_0(\xi) +  \OO_{L^2_\xi}\big(r_\delta(\xi)^{-1}\big).
\end{equations}
Since the terms on both sides have rank at most $1$, the remainder term has rank at most $2$ and we can take the trace of \eqref{eq:8a}, without changing the magnitude of the remainder. This yields
\begin{equation}
\Tr\Big( \Pi_\delta^-(\xi) \big[ \nabla_{\xi_1}\Pi_\delta^-(\xi), \nabla_{\xi_2}\Pi_\delta^-(\xi)\big] \Big) = \Tr\Big( \pi^-_\delta(\xi) \big[ \nabla_{\xi_1}\pi^-_\delta(\xi), \nabla_{\xi_2}\pi^-_\delta(\xi)\big] \Big) +  O\big(r_\delta(\xi)^{-1}\big).
\end{equation}
Above we used the cyclicity of the trace and the formula $\Jj_0(\xi) \Jj_0(\xi)^* = \Id_{\C^2}$ to get rid of the terms $\Jj_0(\xi)$ and $\Jj_0(\xi)^*$. The proof is complete thanks to \eqref{eq:1j}.
\end{proof}

Observe that because $r_\delta(\xi)$ is bounded below by $\nu_F |\xi-\xi_\star|$, for every $\epsi > 0$, 
\begin{equation}\label{eq:1q}
\int_{\Dd(\xi_\star,\epsi)} \dfrac{d\xi}{r_\delta(\xi)} \leq \int_0^\epsi \dfrac{2\pi r \cdot dr}{\nu_F r}  = \dfrac{2\pi \epsi}{\nu_F}.
\end{equation}
We deduce from Lemma \ref{lem:1c} and \eqref{eq:1q} that for every $\epsi \in (0,\epsi_0)$ and $\delta \in (0,\delta_0)$, there exists a constant $C_0$ such that
\begin{equation}
\left|\dfrac{i}{2\pi}  \int_{\Dd(\xi_\star,\epsi)} \Bb_\delta(\xi) d\xi - \dfrac{i}{2\pi} \int_{\Dd(\xi_\star,\epsi)} \BB_\delta(\xi) d\xi \right| \leq C_0\epsi.
\end{equation}
Fix $\epsi_1$ such that $\epsi_1 \leq \epsi_0$ and $4C_0\epsi_1 \leq 1$. Then
\begin{equation}\label{eq:3d}
\left|\dfrac{i}{2\pi}  \int_{\Dd(\xi_\star,\epsi_1)} \Bb_\delta(\xi) d\xi - \dfrac{i}{2\pi} \int_{\Dd(\xi_\star,\epsi_1)} \BB_\delta(\xi) d\xi \right| \leq \dfrac{1}{4}.
\end{equation}
The next lemma computes explicitly $\BB_\delta(\xi)$ and the associated Chern number.

\begin{lem}\label{lem:1e} Let $\epsi_1$ be the number fixed above. As $\delta \rightarrow 0$,
\begin{equation}
\dfrac{i}{2\pi} \int_{\Dd(\xi_\star,\epsi_1)} \BB_\delta(\xi) d\xi = -\dfrac{1}{2} \cdot \sgn(\te_\star) + O(\delta).
\end{equation}
\end{lem}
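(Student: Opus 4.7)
The plan is to diagonalize the $2\times 2$ matrix $\Mm_\delta(\xi)$ explicitly, recognize the problem as computing the Chern-number density of a massive two-band Dirac Hamiltonian, and evaluate the resulting integral directly.

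First I would write $\Mm_\delta(\xi) - E_\star I = \vec d(\xi)\cdot \vec\sigma$, where $\vec\sigma = (\sigma_1,\sigma_2,\sigma_3)$ are the Pauli matrices and
\begin{equation*}
\vec d(\xi) \de \bigl(\Re(\nu_\star(\xi-\xi_\star)),\ -\Im(\nu_\star(\xi-\xi_\star)),\ \delta\te_\star\bigr).
\end{equation*}
Then $|\vec d(\xi)| = r_\delta(\xi)$ and the projector on the lower eigenvalue $\mu_\delta^-(\xi)$ reads $\pi_\delta^-(\xi) = \tfrac{1}{2}\bigl(I - \hat d(\xi)\cdot \vec\sigma\bigr)$ with $\hat d = \vec d / |\vec d|$.

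A direct computation using $(\hat d \cdot \vec\sigma)^2 = I$ and the Pauli commutation relations gives the standard identity
\begin{equation*}
\Tr\bigl(\pi_\delta^-[\p_{\xi_1}\pi_\delta^-, \p_{\xi_2}\pi_\delta^-]\bigr) = -\tfrac{i}{2}\,\hat d\cdot(\p_{\xi_1}\hat d \times \p_{\xi_2}\hat d),
\end{equation*}
so that the real Chern-Weil $2$-form $i\BB_\delta(\xi)\, d\xi_1\wedge d\xi_2$ equals $\tfrac{1}{2}\hat d^*\omega_{S^2}$, where $\omega_{S^2}$ is the round area form on the unit sphere. The pullback formula $\hat d\cdot(\p_{\xi_1}\hat d\times \p_{\xi_2}\hat d) = |\vec d|^{-3}\vec d\cdot (\p_{\xi_1}\vec d \times \p_{\xi_2}\vec d)$ combined with $\p_{\xi_1}\vec d \times \p_{\xi_2}\vec d = (0,0,-\nu_F^2)$ (the minus sign coming from the conjugated second component of $\vec d$) then yields
\begin{equation*}
\hat d^*\omega_{S^2} = -\frac{\nu_F^2\,\delta\,\te_\star}{r_\delta(\xi)^3}\, d\xi_1\wedge d\xi_2.
\end{equation*}

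Polar integration is finally elementary: with $s = \nu_F^2|\xi-\xi_\star|^2 + \delta^2\te_F^2$, the radial integral telescopes and produces
\begin{equation*}
\frac{i}{2\pi}\int_{\Dd(\xi_\star,\epsi_1)}\BB_\delta(\xi)\, d\xi = -\tfrac{1}{2}\,\delta\te_\star\left[\tfrac{1}{\delta\te_F} - \tfrac{1}{\sqrt{\nu_F^2\epsi_1^2 + \delta^2\te_F^2}}\right] = -\tfrac{1}{2}\sgn(\te_\star) + O(\delta),
\end{equation*}
as claimed. The only delicate point is sign bookkeeping: one must verify that the trace formula defining $\BB_\delta$ in \eqref{eq:3i} and the orientation-reversing presence of $-\Im$ in the second component of $\vec d$ conspire to give $-\sgn(\te_\star)/2$ rather than $+\sgn(\te_\star)/2$. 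Once that sign is pinned down, the remainder is a one-line Jacobian calculation.
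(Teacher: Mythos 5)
Your proof is correct and arrives at the same answer, but it takes a modestly different route. The paper first performs the affine change of variables $\eta = \Phi_\delta(\xi) = \xi_\star + \delta\te_\star\ove{\nu_\star}\xi/\nu_F^2$, reducing to the canonical model $\MM(\xi)=\begin{pmatrix}1 & \xi\\ \ove{\xi} & -1\end{pmatrix}$ whose Berry curvature $\BB(\xi) = \tfrac{i}{2}(1+|\xi|^2)^{-3/2}$ is quoted from [FC]; it then integrates over the rescaled disk of radius $\nu_F\epsi_1/(\te_F\delta)$, and handles $\te_\star<0$ separately by the fact that the positive and negative eigenbundles have opposite Chern forms. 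You instead write $\Mm_\delta(\xi) - E_\star = \vec d(\xi)\cdot\vec\sigma$ directly, derive the trace identity $\Tr(\pi_\delta^-[\p_1\pi_\delta^-,\p_2\pi_\delta^-]) = -\tfrac{i}{2}\hat d\cdot(\p_1\hat d\times\p_2\hat d)$ from the Pauli algebra, and compute the pullback area form $\hat d^*\omega_{S^2} = -\nu_F^2\delta\te_\star r_\delta(\xi)^{-3}\,d\xi_1\wedge d\xi_2$ in the original coordinates. Your approach is slightly more self-contained (no citation for the curvature formula) and handles both signs of $\te_\star$ in one stroke, since the sign is carried by $d_3 = \delta\te_\star$; the paper's normal-form route is cleaner algebraically once the formula from [FC] is accepted, but requires the extra duality step for $\te_\star<0$. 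Both are correct; the radial integral is the same either way, and I verified that your expression $-\tfrac{1}{2}\delta\te_\star\bigl[\tfrac{1}{\delta\te_F}-\tfrac{1}{\sqrt{\nu_F^2\epsi_1^2+\delta^2\te_F^2}}\bigr]$ agrees with the polar substitution $s = \nu_F^2 r^2 + \te_F^2\delta^2$ and gives $-\tfrac{1}{2}\sgn(\te_\star)+O(\delta)$ as required.
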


\begin{proof} 1. We first assume that $\te_\star > 0$. Observe that 
\begin{equation}\label{eq:1l}
\Mm_\delta\big( \Phi_\delta(\xi) \big) = E_\star + \delta \te_\star \cdot \MM(\xi)
, \ \ \ \ \MM(\xi) \de \matrice{1 & \xi \\ \ove{\xi} & -1}, \ \ \ \ \Phi_\delta(\xi) \de \xi_\star+ \dfrac{\delta\te_\star  \cdot \ove{\nu_\star}\xi}{\nu_F^2}. \ 
\end{equation}
Above $\C$ is canonically identified with $\R^2$.

2. We compute the Berry curvature $\BB(\xi)$ associated to the negative energy eigenbundle for $\MM(\xi)$, using \cite[(23)]{FC}:
\begin{equations}\label{eq:1m}
\BB(\xi) = \dfrac{i}{2\big(1+|\xi|^2\big)^{3/2}}  \matrice{\xi_1 \\ -\xi_2 \\ 1} \cdot \left( \dd{}{\xi_1} \matrice{\xi_1 \\ -\xi_2 \\ 1} \wedge \dd{}{\xi_2} \matrice{\xi_1 \\ -\xi_2 \\ 1} \right) 
\\
 = \dfrac{i}{2\big(1+|\xi|^2\big)^{3/2}} \matrice{\xi_1 \\ \xi_2 \\ 1} \cdot \left( \matrice{1 \\ 0 \\ 0} \wedge \matrice{0 \\ -1 \\ 0} \right) = \dfrac{i}{2\big(1+|\xi|^2\big)^{3/2}}.
\end{equations}

3. Because of \eqref{eq:1l}, $\BB(\xi) d\xi = \Phi_\delta^* \big(\BB_\delta(\xi)d\xi \big)$. Moreover,
\begin{equation}
\Phi_\delta\left( \Dd\left(0,\frac{\nu_F \epsi_1}{\te_F \delta} \right) \right) = \Dd\big(\xi_\star,\epsi_1 \big).
\end{equation}
It follows that
\begin{equations}
\dfrac{i}{2\pi} \int_{\Dd(\xi_\star,\epsi_1)} \BB_\delta(\xi) d\xi 
= \dfrac{i}{2\pi} \int_{\Dd\big(0,\frac{\nu_F \epsi_1}{\te_F \delta} \big)} \Phi_\delta^* \big(\BB_\delta(\xi) d\xi\big) = \dfrac{i}{2\pi} \int_{\Dd\big(0,\frac{\nu_F \epsi_1}{\te_F \delta} \big)} \BB(\xi) d\xi.
\end{equations}
We now use the formula \eqref{eq:1m} to compute this integral: we have
\begin{equations}\label{eq:8b}
\dfrac{i}{2\pi} \int_{\Dd\big(0,\frac{\nu_F \epsi_1}{\te_F \delta} \big)} \BB(\xi) d\xi = -\dfrac{1}{4\pi} \int_{\Dd\big(0,\frac{\nu_F \epsi_1}{\te_F \delta} \big)} \dfrac{d\xi}{(1+|\xi|^2)^{3/2}}
 =  -\dfrac{1}{2} + O(\delta).
\end{equations}
In the last equality we used that $\epsi_1$ is a fixed constant.

4. We now deal with the case $\te_\star < 0$. In this case, the value \eqref{eq:8b} corresponds to the positive energy eigenbundle  of $\Mm_\delta(\xi_\star + \delta \te_\star \nu_\star^{-1} \xi)$. The positive and negative eigenbundles direct sum to the trivial bundle $\R^2 \times \C^2$, whose total Berry curvature vanishes. We deduce that when $\te_\star < 0$ and $\delta$ goes to zero,
\begin{equation}
\dfrac{i}{2\pi}\int_{\Dd(\xi_\star,\epsi_1)} \BB_\delta(\xi) d\xi = -\dfrac{i}{2\pi} \int_{\Dd\big(0,\frac{\nu_F \epsi_1}{\te_F \delta} \big)} \BB(\xi) d\xi  = \dfrac{1}{2} + O(\delta).
\end{equation}
This completes the proof. \end{proof}

\subsection{Proof of Theorem \ref{thm:1}} We are now ready to prove Theorem \ref{thm:1}. Fix $\epsi_1$ as in \eqref{eq:3d}. Because of the definition of the first Chern class \eqref{eq:2n},
\begin{equation}\label{eq:1s}
c_1(\Ee_{\delta,+}) = \dfrac{i}{2\pi} \int_{\Ll^*} \Bb_\delta(\xi) d\xi = \dfrac{i}{2\pi} \int_{\substack{\xi \in \Ll^* \\ \rho(\xi) \geq \epsi_1}} \Bb_\delta(\xi) d\xi + \dfrac{i}{2\pi}\sum_{J=A,B} \int_{\Dd(\xi_\star^J, \epsi_1)} \Bb_\delta(\xi) d\xi. \ \
\end{equation}
Because of Lemma \ref{lem:1a}, the first integral is $O(\delta)$. The sum in \eqref{eq:1s} reduces to integrals of traces of Berry curvatures $\BB_\delta(\xi)$ associated to low-energy eigenbundles $\Mm_\delta(\xi)$, modulo an error term that is at most $2 \cdot 1/4 = 1/2$ because of \eqref{eq:3d}. Lemma \ref{lem:1e} computes these integrals and shows that their sum equals $-\sgn(\te_\star) + O(\delta)$. We end up with:
\begin{equation}
\big| c_1(\Ee_{\delta,+}) +  \sgn(\te_\star) \big| \leq \dfrac{1}{2} +  O(\delta).
\end{equation}
Making $\delta \rightarrow 0$ and using that $c_1(\Ee_{\delta,+})$ and $\sgn(\te_\star)$ are both integers, we conclude that $c_1(\Ee_{+,\delta}) = - \sgn(\te_\star)$.

We can go from $\Pp_{\delta, +}$ to $\Pp_{\delta,-}$ by simply switching $\Ww$ to $-\Ww$. This changes $\te_\star$ to $-\te_\star$. Therefore $c_1(\Ee_{-,\delta}) = \sgn(\te_\star)$. This completes the proof of Theorem \ref{thm:1}.

\section{Proof of Theorem \ref{thm:2}}

\subsection{The edge problem}\label{sec:3.1} We review the definition of the edge operator $\Pp_\delta$ introduced in \cite[\S1.7]{Dr0}. This operator models interface effect between two materials -- described respectively by $\Pp_{\delta,+}$ and $\Pp_{\delta,-}$ -- along a rational edge $\R v$, $v \in \Lambda$. Write $v = a_1 v_1 + a_2 v_2$ with $a_1, a_2 \in \Z$ relatively prime and set
\begin{equations}\label{eq:3w}
v' \de b_1 v_1 + b_2 v_2, \ \ \ \ a_1 b_2 - a_2 b_1 = 1, \ \ b_1, \ b_2 \in \Z, \\
k \de b_2 k_1 - b_1 k_2, \ \ \ \ k' \de -a_2 k_1 + a_1 k_2.
\end{equations} 
The operator $\Pp_\delta$ is $-\Delta_{\R^2} + \Vv + \delta \cdot  \kappa_\delta \cdot \Ww$, 
where the function $\kappa_\delta \in C^\infty\big(\R^2,\R\big)$ is a domain wall across $\R v$:
\begin{equation}\label{eq:3v}
\kappa_\delta(x) = \kappa(\delta \blr{k',x}), \ \ \ \ \exists L > 0, \ \  \kappa(t) = \systeme{ -1 \text{ when } x \leq -L,
\\
1 \ \ \text{ when } \ \ x \geq L.}
\end{equation}

Since $\lr{k',v} = 0$, the operator $\Pp_\delta$ is periodic w.r.t. $\Z v$ (though it is not periodic w.r.t. $\Lambda$). We denote by $\Pp_\delta[\zeta]$ the operator formally equal to $\Pp_\delta$, but acting on
\begin{equation}\label{eq:4n}
L^2[\zeta] \de \left\{ u \in L^2_\loc(\R^2,\C), \ u(x+v) = e^{i\zeta} u(x), \ \int_{\R^2/\Z v} |u(x)|^2 dx   < \infty\right\}.
\end{equation}
The bulk operators $\Pp_{\delta,\pm}[\zeta]$ prescribe the essential spectrum of $\Pp_{\delta}[\zeta]$:
\begin{equation}
\Sigma_\ess\big( \Pp_{\delta}[\zeta] \big) = \Sigma_\ess\big( \Pp_{\delta,+}[\zeta] \big) \cup \Sigma_\ess\big( \Pp_{\delta,-}[\zeta] \big).
\end{equation}
When \eqref{eq:2b} is satisfied, the operator $\Pp_\delta[\zeta]$ has an $L^2[\zeta]$-gap, containing the energy level $E_\flat(\zeta)$. The edge index $\NN$ of $\Pp_\delta$ in this gap is defined as the signed number of eigenvalues of the family $\Pp_\delta[\zeta] - E_\flat(\zeta)$ crossing $E_\star$ downward as $\zeta$ spans $[0,2\pi]$. It is a topological invariant of the system; we refer to \cite{Wa} for a comprehensive introduction. In this section we prove the bulk-edge correspondence: the edge index of $\Pp_\delta$ can be computed from the bulk index of the operators $\Pp_{\delta,\pm}$.

\subsection{Description of the problem} Fix $\delta \in (0,\delta_\sharp)$ such that \eqref{eq:2b} holds. Define
\begin{equation}\label{eq:1x}
\delta_* \de \sup\Big\{ \delta > 0 : \ \forall \zeta \in [0,2\pi], \ \sup_{\tau \in [0,2\pi]} \lambda_{\delta,n}(\zeta k + \tau k') < \inf_{\tau \in [0,2\pi]} \lambda_{\delta,n}(\zeta k + \tau k') \Big\}.
\end{equation}
In \cite[Corollary 4]{Dr0} we showed that the spectral flow of $\Pp_{\delta}-E_\star$ is $-2\cdot \sgn(\te_\star)$ for every $\delta \in (0,\delta_*)$. Therefore, if $\delta_\flat \in (0,\delta_*)$ then Theorem \ref{thm:2} holds.

However, generally $\delta_* < \delta_\sharp$. This happens for instance when the no-fold condition of Fefferman--Lee-Thorp--Weinstein \cite[\S1.3]{FLTW3} fails. If $\zeta_\star = \lr{\xi_\star,v}$, the failure of the no-fold condition is equivalent to 
\begin{equation}
\exists \tau \in [0,2\pi], \ \  \zeta_\star k + \tau k' \notin \xi_\star + 2\pi \Lambda_*, \ \  \lambda_{0,n}(\zeta_\star k + \tau k') = E_\star \ \text{ or } \ \lambda_{0,n+1}(\zeta_\star k + \tau k') = E_\star.
\end{equation}
Since $\xi_\star \in \zeta_\star k + \R k'$,  in this situation the set involved in \eqref{eq:1x} is empty. Hence $\delta_* = -\infty$. As stated, Theorem \ref{thm:2} is more general than \cite[Corollary 4]{Dr0}. We will nonetheless derive Theorem \ref{thm:2} following the approach of \cite{Dr0}.

\begin{center}
\begin{figure}
\caption{The red curves represent sections of the $n$-th and $n+1$-th dispersion surfaces of $\Pp_{\delta,+}$ along $\zeta_\star k + \R k'$. When $\delta = \delta_\flat$, the operator $P_{\delta_\flat,+}[\zeta_\star]$ has a spectral gap at energy $E_\star + E_\flat(\zeta_\star)$ and the spectral flow of $\Pp_\delta-E_\star - E_\flat$ is well defined. As $\delta$ decreases to $0$, although the $n$-th and $n+1$-th dispersion surfaces of $\Pp_{\delta,+}$ remain separated, $\Pp_{\delta,+}[\zeta_\star]$ does not have an associated gap. However, $\PP_\delta[\zeta_\star] = \Pp_{\delta,+}[\zeta_\star] - \Tt_\delta[\zeta_\star]$ does.
}\label{fig:1}
{\includegraphics{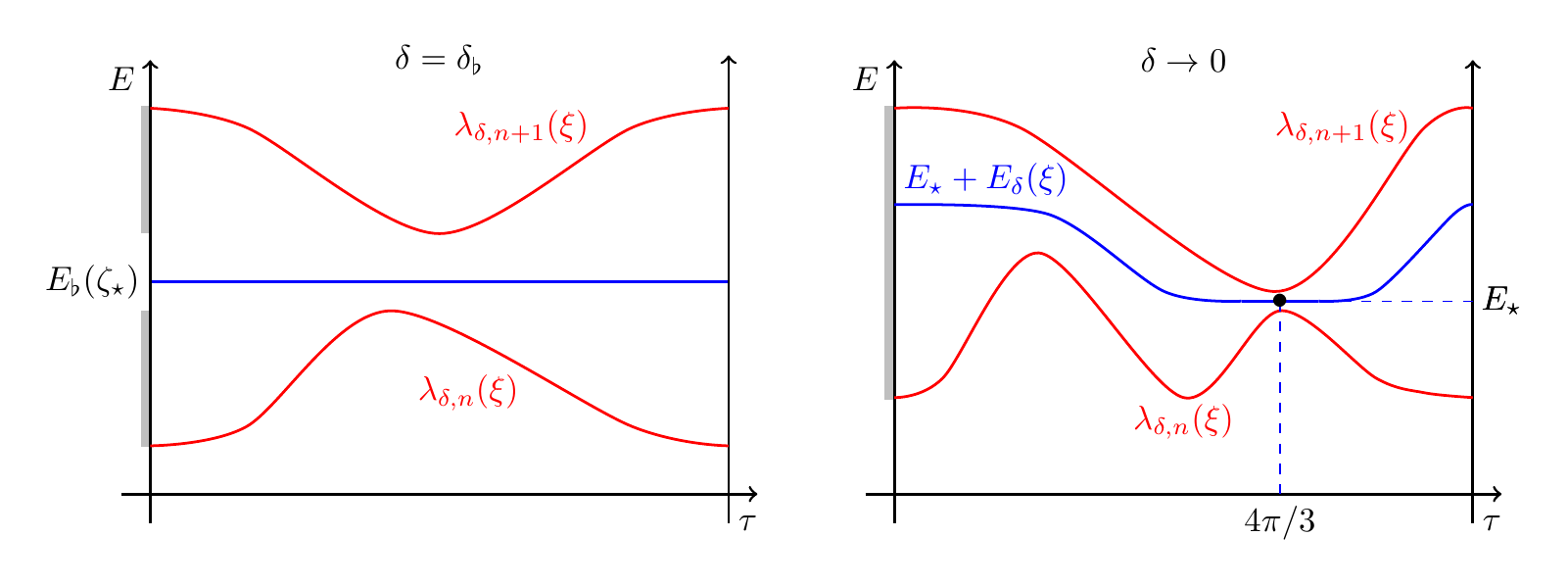}}
\end{figure}
\end{center}

\vspace*{-1cm}

As a preparation, we fix $E_\delta(\xi)$ depending smoothly on $(\delta,\xi) \in [0,\delta_\sharp) \times \R^2$, $2\pi\Lambda^*$-periodic in $\xi$, such that:
\begin{equations}\label{eq:2f}
\xi \notin \big\{\xi_\star^A,\xi_\star^B\big\} +2\pi \Lambda^*, \ \  \delta \in (0,\delta_\sharp) \ \ \Rightarrow \ \ 
\lambda_{\delta,n}(\xi) < E_\star + E_\delta(\xi) < \lambda_{\delta,n+1}(\xi);
\\
 E_\star + E_{\delta_\flat}(\xi) = E_\flat(\lr{\xi,v}); \ \ \ \text{ and } 
E_\delta(\xi) = 0 \text{ for $(\delta,\xi)$ near $\big(0,\xi_\star^A\big)$ and $\big(0,\xi_\star^B\big)$}.
\end{equations}
The first  condition is possible because of \eqref{eq:1a}; the second one is possible because of \eqref{eq:2b}. The third one is possible because $E_\star$ is not an eigenvalue of $\Pp_{\delta,+}(\xi)$ for $\xi$ near $\xi_\star^A$ and $\xi_\star^B$ and $\delta$ near $0$ -- see \eqref{eq:8c}.

Define $\Tt_\delta[\zeta]$ the operator formally equal to $E_\delta(D_x)$, but acting on $L^2[\zeta]$:
\begin{equation}\label{eq:1y}
\Tt_\delta[\zeta] \de \dfrac{1}{2\pi} \int_{[0,2\pi]}^\oplus E_\delta(\zeta k + \tau k') \cdot \Id_{L^2_{\zeta k+ \tau k'}} d\tau.
\end{equation}
Let $\PP_\delta[\zeta] = \Pp_\delta[\zeta] - \Tt_\delta[\zeta]$. Because of \eqref{eq:2f}, for $\delta \in (0,\delta_\sharp)$, $E_\star$ does not belong to the spectrum of the operators $\Pp_{\delta,\pm}(\xi) - E_\delta(\xi)$. Hence $\PP_\delta[\zeta]$ has an essential spectral gap at energy $E_\star$. We have the spectral flow equalities:
\begin{equation}\label{eq:2s}
\NN = \Sf\big(\Pp_{\delta_\flat} - E_\star- E_\flat \big) = \Sf\big(\Pp_{\delta_\flat} - E_\star - \Tt_{\delta_\flat}\big) = \Sf\big(\PP_{\delta_\flat} - E_\star\big)= \Sf\big(\PP_{\delta} - E_\star\big).
\end{equation}
The first equality is simply the definition of $\NN$. The second one comes from $\Tt_{\delta_\flat}[\zeta] = E_\flat(\zeta) \cdot \Id_{L^2_\zeta}$. Indeed, because of \eqref{eq:1y},
\begin{equation}
\Tt_\delta[\zeta] = \dfrac{1}{2\pi} \int_{[0,2\pi]}^\oplus E_\delta(\zeta k + \tau k') \cdot \Id_{L^2_{\zeta k+ \tau k'}} d\tau = \dfrac{E_\flat(\zeta)}{2\pi} \int_{[0,2\pi]}^\oplus   \Id_{L^2_{\zeta k+ \tau k'}} d\tau = E_\flat(\zeta) \cdot \Id_{L^2_\zeta}.
\end{equation}
The third equality in \eqref{eq:2s} is the definition of $\PP_{\delta_\flat}[\zeta]$; the last one holds because for $\delta \in (0,\delta_\sharp)$, $\PP_{\delta}[\zeta]-E_\star$ has a gap containing $0$, hence its spectral flow does not depend on $\delta$. Because of \eqref{eq:2s}, we can obtain $\NN$ by taking the limit of $\Sf\big(\PP_{\delta} - E_\star\big)$ as $\delta \rightarrow 0$.

We now follow the approach of \cite{Dr0}: we derive a resolvent estimate for $\PP_\delta[\zeta]$ as $\delta \rightarrow 0$.  The first step is an estimate on the bulk resolvent   $(\PP_{\delta,\pm}[\zeta]-E_\star)^{-1}$ where $\PP_{\delta,\pm}[\zeta] = \Pp_{\delta,\pm}[\zeta] + \Tt_\delta[\zeta]$ and $(\lambda,\zeta)$ is near $(E_\star,\zeta_\star)$, as in \cite[\S5]{Dr0}. Introduce:
\begin{equations}\label{eq:5r}
\RR : L^2\big(\R^2/\Z v, \C^2\big) \rightarrow L^2\big(\R,\C^2\big), \ \ \ \ \big(\RR f\big)(t) \de \int_0^1 f(sv + tv') ds; 
\\
\RR^* : L^2\big(\R, \C^2\big) \rightarrow L^2\big(\R^2/\Z v,\C^2\big), \ \ \ \ \big(\RR^* g\big)(x) \de g\big(\hspace*{-.8mm}\blr{k',x}\hspace*{-.8mm}\big);
\\
\UU_\delta : L^2\big(\R,\C^2\big) \rightarrow L^2\big(\R,\C^2\big), \ \ \ \ \big(\UU_\delta f\big)(t) \de  f(\delta t).
\end{equations}
Let $\Di(\mu)$ be the operator 
\begin{equation}\label{eq:1e}
\Di(\mu) \de  \matrice{\te_\star & \nu_\star k'   \\ \ove{\nu_\star k' } & -\te_\star}D_t + \mu \matrice{0 & \nu_\star \ell  \\ \ove{\nu_\star \ell} & 0 } + \matrice{\te_\star & 0 \\ 0 & -\te_\star} \kappa, \ \ \ \ \ell \de k-\dfrac{\lr{k,k'}}{|k|^2} k'.
\end{equation}
Above, $\nu_\star \ell$ is the complex number defined according to \eqref{eq:1i}.
We let $\Di_\pm(\mu) : H^1(\R,\C^2)$ $\rightarrow L^2(\R,\C^2)$ be the formal limits of $\Di(\mu)$ as $t \rightarrow \pm \infty$ -- i.e. replacing $\kappa$ in \eqref{eq:1e} by $\pm 1$. 
 
 \begin{theorem}\label{thm:3} Assume that \eqref{eq:2a} holds and that $\te_\star \neq 0$, $\zeta_\star = \lr{\xi_\star,v} \notin \pi \Z$. Fix $\tmu > 0$ and $\epsilon > 0$. There exists $\delta_0 > 0$ such that if
 \begin{equations}
 \delta \in (0,\delta_0), \ \ \mu \in (-\tmu,\tmu), \ \ z \in \Dd\left( 0, \sqrt{ \te_F^2 + \mu^2 \cdot \nu_F^2 | \ell|^2 } -\epsilon \right), \\
 \zeta = \zeta_\star + \delta \mu, \ \  \ \ \lambda = E_\star + \delta z
 \end{equations}
 then the operators $\PP_{\delta,\pm}[\zeta]- \lambda : H^2[\zeta] \rightarrow L^2[\zeta]$ are invertible. Furthermore, 
 \begin{equations}
 \left.\big(\PP_{\delta,\pm}[\zeta]- \lambda \big)^{-1} = \SSS_{\pm\delta}(\mu,z) + \OO_{L^2[\zeta]}\left(\delta^{-1/3}\right) \right., \\ (k' \cdot D_x) \big(\PP_{\delta,\pm}[\zeta]- \lambda \big)^{-1} = \SSS_{\pm\delta}^D(\mu,z) + \OO_{L^2[\zeta]}\left(\delta^{-1/3}\right),
\\
\text{where:} \ \ \ \  \SSS_{\pm\delta}(\mu,z) \de \dfrac{1}{\delta} \cdot \matrice{\phi_1 \\ \phi_2}^\top e^{i  \mu \delta \blr{\ell,x} } \RR^* \cdot  \UU_\delta \big( \Di_\pm(\mu) - z \big)^{-1} \UU_\delta^{-1} \cdot \RR e^{-i  \mu \delta \blr{\ell,x} } \ove{\matrice{\phi_1 \\ \phi_2}},
 \\
  \SSS_{\pm\delta}^D(\mu,z) \de \dfrac{1}{\delta} \cdot \matrice{(k' \cdot D_x)\phi_1 \\ (k' \cdot D_x)\phi_2}^\top e^{i  \mu \delta \blr{\ell,x} } \RR^* \cdot  \UU_\delta \big( \Di_\pm(\mu) - z \big)^{-1} \UU_\delta^{-1} \cdot \RR e^{-i  \mu \delta \blr{\ell,x} } \ove{\matrice{\phi_1 \\ \phi_2}}.
 \end{equations}
 \end{theorem}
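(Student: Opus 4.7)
The plan mirrors \cite[\S5]{Dr0}. Since $\PP_{\delta,\pm}$ is $\Lambda$-periodic, the Floquet decomposition in the $v'$-direction gives
\begin{equation}
\big(\PP_{\delta,\pm}[\zeta]-\lambda\big)^{-1} = \dfrac{1}{2\pi}\int_{0}^{2\pi} \big(\PP_{\delta,\pm}(\zeta k+\tau k')-\lambda\big)^{-1}\, d\tau,
\end{equation}
so the task reduces to a fiberwise estimate on $L^2_{\zeta k+\tau k'}$, uniform in $\tau$. Writing $\tau_\star$ for the $k'$-coordinate of $\xi_\star$, the fiber momentum $\xi = (\zeta_\star+\delta\mu)k + \tau k'$ approaches $\xi_\star$ only as $\tau\to\tau_\star$. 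The hypothesis $\zeta_\star\notin\pi\Z$ places $\xi_\star^A$ and $\xi_\star^B$ in distinct $\zeta$-fibers, so within a single period in $\tau$ only $\xi_\star$ is degenerate.

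On the outer region $|\tau-\tau_\star| > \epsi_0$ for some small fixed $\epsi_0$, the band separation in \eqref{eq:2f} together with the uniform gap \eqref{eq:4a} gives $\big\|(\PP_{\delta,\pm}(\xi)-\lambda)^{-1}\big\|_{L^2_\xi} = O(1)$ when $\lambda = E_\star + \delta z$, hence an $O(1)$ contribution to the direct integral. On the same outer region the tail of $\SSS_{\pm\delta}(\mu,z)$, coming from the rescaled variable $|\sigma| = |\tau-\tau_\star|/\delta > \epsi_0/\delta$, is handled by the elliptic estimate on the constant-coefficient Dirac operator $\Di_\pm(\mu)$: its Fourier multiplier symbol has norm $O(|\sigma|^{-1})$ once $|\sigma|$ exceeds $\sqrt{\te_F^2+\mu^2\nu_F^2|\ell|^2}$, so the piece of $\SSS_{\pm\delta}$ from $|\sigma|>\epsi_0/\delta$ is also $O(1)$.

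On the inner region $|\tau-\tau_\star| < \epsi_0$, $E_\delta$ vanishes near $\xi_\star^A$ by the third line of \eqref{eq:2f}, so $\PP_{\delta,\pm}(\xi) = \Pp_{\delta,\pm}(\xi)$ and an adaptation of Lemma \ref{lem:1d} to $\lambda = E_\star + \delta z$ in the spectral gap of $\Mm_{\pm\delta}(\xi)$ yields
\begin{equation}
\big(\PP_{\delta,\pm}(\xi)-\lambda\big)^{-1} = \Jj_0(\xi)^*\big(\Mm_{\pm\delta}(\xi)-\lambda\big)^{-1}\Jj_0(\xi) + \OO_{L^2_\xi}(1).
\end{equation}
Writing $\xi - \xi_\star = \delta\mu k + (\tau-\tau_\star)k'$ and rescaling $\tau = \tau_\star + \delta\sigma$, the decomposition $k = \ell + \dfrac{\lr{k,k'}}{|k|^2}k'$ -- together with a compensating phase $e^{-i\mu\delta\lr{\ell,x}}$ that realigns the momentum by $-\delta\mu\ell$ -- identifies $\delta^{-1}(\Mm_{\pm\delta}(\xi) - \lambda)$ with the Fourier multiplier symbol of $\Di_\pm(\mu) - z$ in the $t$-variable. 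Substituting and changing variables in the direct integral converts the principal part into
\begin{equation}
\dfrac{1}{\delta}\matrice{\phi_1 \\ \phi_2}^\top e^{i\mu\delta\lr{\ell,x}}\RR^*\UU_\delta\big(\Di_\pm(\mu)-z\big)^{-1}\UU_\delta^{-1}\RR e^{-i\mu\delta\lr{\ell,x}}\ove{\matrice{\phi_1 \\ \phi_2}},
\end{equation}
which is precisely $\SSS_{\pm\delta}(\mu,z)$: the outer exponentials absorb the shift $\zeta - \zeta_\star = \delta\mu$, the matrix brackets descend from $\Jj_0(\xi_\star)$, and $\RR^*\UU_\delta(\,\cdot\,)\UU_\delta^{-1}\RR$ realizes the inverse Fourier transform in $\sigma$ composed with the $\delta$-rescaling.

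The main obstacle is turning the fiberwise $\OO_{L^2_\xi}(1)$ remainder into the sharper $\OO_{L^2[\zeta]}(\delta^{-1/3})$ operator bound. The error in Lemma \ref{lem:1d} is of relative size $|\xi-\xi_\star|^2/r_\delta(\xi)^2$, bounded by $1$ but only marginally smaller as $\xi$ leaves $\xi_\star$ on a scale $\delta^\alpha$ with $\alpha < 1$. I would therefore partition $\tau$ into $|\tau-\tau_\star| < \delta^\alpha$ and $|\tau-\tau_\star| \geq \delta^\alpha$, apply the two-band reduction on the former and the spectral gap on the latter, and optimize $\alpha$: the choice $\alpha = 2/3$ balances the quadratic error of \eqref{eq:4q} against the boundary mismatch at the transition and produces the exponent $-1/3$. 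The companion estimate on $(k'\cdot D_x)\big(\PP_{\delta,\pm}[\zeta]-\lambda\big)^{-1}$ follows from the same decomposition after commuting $k'\cdot D_x$ through the representation of $\SSS_{\pm\delta}(\mu,z)$: the action on $e^{\pm i\mu\delta\lr{\ell,x}}$ contributes $O(\delta)$, the action through $\RR^*\UU_\delta$ produces a $\sigma$-factor absorbed by the Dirac symbol, and only the action on the $\phi_j$'s survives at principal order, replacing $\phi_j$ by $(k'\cdot D_x)\phi_j$ and yielding $\SSS^D_{\pm\delta}(\mu,z)$.
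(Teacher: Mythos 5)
Your approach is essentially the same as the paper's: decompose $(\PP_{\delta,\pm}[\zeta]-\lambda)^{-1}$ into Floquet fibers along $\zeta k + \R k'$, control the region away from $\xi_\star$ via a uniform spectral gap, control the region near $\xi_\star$ via the two-band reduction, and integrate. The paper presents its proof of Theorem~\ref{thm:3} as a reduction to \cite[Theorem 3]{Dr0}, and the entire content is to verify the two modifications you also identify: (i) the hypothesis $\zeta_\star \notin \pi\Z$ guarantees that $\xi_\star^A$ and $\xi_\star^B$ cannot both lie on $\zeta_\star k + \R k'$ modulo $2\pi\Lambda^*$, so only one Dirac momentum meets the fiber line and the outer-region estimate goes through; (ii) since $E_\delta$ vanishes near $\xi_\star$ by the third condition in \eqref{eq:2f}, the shift $\Tt_\delta$ leaves the operator unchanged near the Dirac momentum, so the two-band reduction of \cite{Dr0} applies verbatim. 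Your reconstruction of how the rescaling $\tau = \tau_\star + \delta\sigma$, the decomposition of $k$ into $\ell$ plus a $k'$-component, and the conjugating phase $e^{\pm i\mu\delta\lr{\ell,x}}$ assemble $\SSS_{\pm\delta}$ and $\SSS_{\pm\delta}^D$ is consistent with the paper's definitions.

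One technical point worth flagging: the paper's Lemma~\ref{lem:1d} explicitly records that \cite[Lemma 4.3]{Dr0} is stated on the scale $|\xi-\xi_\star| \leq \delta^{1/3}$, which suggests that the cutoff in the integration step is $\alpha = 1/3$ rather than your $\alpha = 2/3$. Since $\delta^{2/3}$ is smaller than $\delta^{1/3}$, your inner region is narrower, and your claim that $\alpha=2/3$ balances the quadratic error of \eqref{eq:4q} against the transition mismatch is not substantiated; whether it still delivers $\OO_{L^2[\zeta]}(\delta^{-1/3})$ would have to be checked against the integration in \cite{Dr0}, which neither you nor the paper spells out. The overall structure, however, is the same as the paper's and is correct.
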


\begin{proof} We explain why the proof of Theorem \ref{thm:3} is the same as \cite[Theorem 3]{Dr0}, without giving full details. There we processed with three main steps:
\begin{itemize}
\item We proved resolvent estimates on $L^2_\xi$ for $\xi \in \zeta k + \R k'$, away from $\xi_\star$;
\item We proved resolvent estimates on $L^2_\xi$ for $\xi \in \zeta k + \R k'$ near $\xi_\star$;
\item We integrated these estimates over the segment $\zeta k + [0,2\pi] \cdot k'$.
\end{itemize}

To reproduce the first step, we must check that $\PP_{\delta,\pm}(\xi)$ has a spectral gap near $E_\star$, when $\xi$ is away from $\xi_\star$ and $\delta$ is small. The eigenvalues of $\PP_{\delta,+}(\xi)$ are $\lambda_{\delta,j}(\xi) - E_\delta(\xi)$. Because of \eqref{eq:2f}, $\lambda_{\delta,n}(\xi) - E_\delta(\xi)$ gets closed to $E_\star$ only if $\xi$ approaches $\xi_\star^A$ or $\xi_\star^B$ modulo $2\pi \Lambda^*$. We must guarantee that $\xi_\star^A$ and $\xi_\star^B$ do not both belong to $\zeta_\star k + \R k' + 2\pi \Lambda^*$. This is equivalent to $\zeta_\star \notin \pi \Z$ --  which is assumed in Theorem \ref{thm:3}. Hence the first step in the proof of \cite[Theorem 3]{Dr0} goes through with only minor modifications: an analog of \cite[Lemma 4.1]{Dr0} holds. 

Since $E_\delta(\xi)$ vanishes near $\xi_\star$, adding the operator $E_\delta(\xi) \cdot \Id_{L^2_\xi}$ does not modify $\Pp_{\delta,+}(\xi)$ for $\xi$ near $\xi_\star$. Thus the second step in the proof of \cite[Theorem 3]{Dr0} is unchanged.

Because the first and second step lead to the same results as in \cite[\S 4]{Dr0}, the third step (the integration process) is identical. This completes the proof of Theorem \ref{thm:3}.
\end{proof}

As in \cite[\S6]{Dr0}, we use the bulk resolvent estimates of Theorem \ref{thm:3} to derive resolvent estimate for the edge operator $\PP_\delta[\zeta]$. We introduce a parametrix:
\begin{equation}
\QQ_\delta(\zeta,\lambda) \de  \sum_\pm \chi_{\pm,\delta} \cdot (\PP_{\delta,\pm}[\zeta]-\lambda)^{-1}, \ \ \ \ \chi_{\pm,\delta} \de \dfrac{1 \pm \kappa_\delta}{2}.
\end{equation}
A calculation shows:
\begin{equations}\label{eq:3e}
\big( \PP_\delta[\zeta]-\lambda \big) \cdot \QQ_\delta(\zeta,\lambda) - \Id 
= \sum_{\pm} \big( \PP_\delta[\zeta]-\lambda \big) \cdot \chi_{\pm,\delta} \cdot \big(\PP_{\delta,\pm}[\zeta]-\lambda\big)^{-1} - \Id
\\
= \sum_{\pm} \big( \PP_{\delta,\pm}[\zeta]-\lambda + \kappa_\delta \cdot \delta\Ww \mp \delta \Ww \big) \cdot \chi_{\pm,\delta} \cdot \big(\PP_{\delta,\pm}[\zeta]-\lambda\big)^{-1}
\\
= \sum_{\pm} \mp \dfrac{1 - \kappa_\delta^2}{2} \cdot \delta\Ww \cdot \big(\PP_{\delta,\pm}[\zeta]-\lambda\big)^{-1} + \sum_{\pm} \left[D_x^2 + \Tt_\delta[\zeta],\chi_{\pm,\delta}\right] \cdot \big(\PP_{\delta,\pm}[\zeta]-\lambda\big)^{-1}
\\
= \sum_{\pm} \left(\left[D_x^2,\chi_{\pm,\delta}\right] \mp \dfrac{1 - \kappa_\delta^2}{2}  \cdot \delta\Ww\right)  \cdot \big(\PP_{\delta,\pm}[\zeta]-\lambda\big)^{-1} + \sum_{\pm} \big[\Tt_\delta[\zeta],\chi_{\pm,\delta}\big] \cdot \big(\PP_{\delta,\pm}[\zeta]-\lambda\big)^{-1}.
\end{equations}
The next lemma proves that the terms $\big[\Tt_\delta[\zeta],\chi_{\pm,\delta}\big] \cdot (\PP_{\delta,\pm}[\zeta]-\lambda)^{-1}$ are negligible. 

\begin{lem}\label{lem:1f} Assume that the conditions of Theorem \ref{thm:3} are satisfied. Then 
\begin{equation}
\big[\Tt_\delta[\zeta],\chi_{\pm,\delta}\big] \cdot \big(\PP_{\delta,\pm}[\zeta]-\lambda\big)^{-1} = \OO_{L^2[\zeta]}(\delta^{2/3}).
\end{equation}
\end{lem}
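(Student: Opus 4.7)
The plan is to use the resolvent decomposition of Theorem \ref{thm:3} together with the hypothesis from \eqref{eq:2f} that $E_\delta$ vanishes in a fixed neighborhood of $\xi_\star^{A,B} + 2\pi \Lambda^*$ for $\delta$ small. This vanishing causes $\Tt_\delta$ to effectively annihilate the spectrally-singular part of the resolvent, while the slow variation of $\chi_{\pm,\delta}$ contributes an independent factor of $\delta$ through the commutator.

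First, write $(\PP_{\delta, \pm}[\zeta]-\lambda)^{-1} = \SSS_{\pm \delta}(\mu, z) + R_\delta$ with $R_\delta = \OO_{L^2[\zeta]}(\delta^{-1/3})$ by Theorem \ref{thm:3}. Since $\chi_{\pm, \delta}(x) = \chi_\pm(\delta \lr{k', x})$ satisfies $\|\nabla^\alpha \chi_{\pm, \delta}\|_{L^\infty} = O(\delta^{|\alpha|})$ and $\Tt_\delta = E_\delta(D)$ is the Fourier multiplier by a bounded smooth symbol, standard pseudodifferential calculus yields $[\Tt_\delta, \chi_{\pm, \delta}] = \OO_{L^2[\zeta]}(\delta)$. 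Hence $[\Tt_\delta, \chi_{\pm, \delta}] R_\delta = \OO_{L^2[\zeta]}(\delta) \cdot \OO_{L^2[\zeta]}(\delta^{-1/3}) = \OO_{L^2[\zeta]}(\delta^{2/3})$, giving the claimed bound on the remainder contribution.

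The main work is to control $[\Tt_\delta, \chi_{\pm, \delta}] \SSS_{\pm \delta}$, where the $\delta^{-1}$ prefactor in $\SSS_{\pm\delta}$ rules out a direct estimate. I would use the leading-order pseudodifferential expansion
\begin{equation*}
[\Tt_\delta, \chi_{\pm, \delta}] = -i (\nabla_\xi E_\delta)(D) \cdot \nabla \chi_{\pm, \delta} + \OO_{L^2[\zeta]}(\delta^2),
\end{equation*}
and observe that $\nabla_\xi E_\delta$, like $E_\delta$, vanishes in a fixed neighborhood of $\xi_\star^{A, B} + 2\pi\Lambda^*$. The explicit formula from Theorem \ref{thm:3} exhibits $\SSS_{\pm \delta}(\mu, z) u = \delta^{-1} F_u$ with $F_u = \phi^\top e^{i\mu\delta\lr{\ell, \cdot}} \RR^* \UU_\delta h_u$, whose output has Floquet-Bloch spectrum localized within $O(\delta)$ of $\tau_\star$ (where $\xi_\star = \zeta k + \tau_\star k'$): the Bloch modes $\phi_j$ place mass precisely on $\tau_\star + 2\pi\Z$, while the phase $e^{i\mu\delta\lr{\ell, \cdot}}$ and the envelope $\RR^* \UU_\delta h_u$ (varying slowly at scale $1/\delta$) only shift and spread this mass by $O(\delta)$. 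Multiplication by $\nabla \chi_{\pm, \delta}$, itself slowly varying at scale $1/\delta$, preserves this localization, so $(\nabla_\xi E_\delta)(D)$ applied to $\nabla\chi_{\pm,\delta} F_u$ only picks up the Floquet tail. A Parseval-type tail estimate, exploiting $\|h_u\|_{H^1} = O(\delta^{1/2})\|u\|$ inherited from the smoothing of $(\Di_\pm(\mu)-z)^{-1} : L^2(\R,\C^2) \to H^1(\R,\C^2)$, gives $\|(\nabla_\xi E_\delta)(D)(\nabla\chi_{\pm, \delta} F_u)\|_{L^2[\zeta]} = O(\delta^2) \|u\|$. After absorbing the $\delta^{-1}$ prefactor, $[\Tt_\delta, \chi_{\pm, \delta}] \SSS_{\pm\delta} = \OO_{L^2[\zeta]}(\delta)$, absorbed into $\OO_{L^2[\zeta]}(\delta^{2/3})$. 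The main obstacle will be executing this tail estimate cleanly: converting between the 1D variable $\lr{k', \cdot}$ (where the envelope lives at scale $1/\delta$) and the Floquet variable $\tau$ (where the vanishing set of $E_\delta$ is described) via Poisson summation, while tracking how multiplication by $\nabla\chi_{\pm,\delta}$ redistributes Floquet mass.
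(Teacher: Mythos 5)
Your proposal is essentially correct and follows the same conceptual route as the paper: split the resolvent as $\SSS_{\pm\delta}+R_\delta$, use a semiclassical commutator bound $[\Tt_\delta,\chi_{\pm,\delta}]=\OO(\delta)$ to dispose of the $R_\delta=\OO(\delta^{-1/3})$ contribution, and exploit the vanishing of $E_\delta$ near $\xi_\star$ together with the frequency concentration of $\SSS_{\pm\delta}$ (inherited from the $H^1$-smoothing of $(\Di_\pm(\mu)-z)^{-1}$) to overcome the $\delta^{-1}$ prefactor. Your heuristic arithmetic also lands correctly: the $O(\delta)$ Fourier tail of $\UU_\delta h_u$ (controlled by $\|h_u\|_{H^1}=O(\delta^{1/2})\|u\|$) times the $O(\delta)$ amplitude of $\nabla\chi_{\pm,\delta}$ does give $O(\delta^2)\|u\|$, hence $O(\delta)$ after dividing by $\delta$.

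Where you diverge, and where the gap is: you leave the "Parseval-type tail estimate" and the change of variables between $\lr{k',\cdot}$ and the Floquet parameter $\tau$ as an acknowledged obstacle, proposing Poisson summation without carrying it out. The paper avoids this entirely. It first proves the algebraic identity \eqref{eq:8d}, which transports $[\Tt_\delta[\zeta],\chi_{\pm,\delta}]$ to a 1D commutator $\UU_\delta^{-1}[E_\delta(\zeta k+\delta k'D_t),\chi_\pm]\UU_\delta$; then it proves the key estimate \eqref{eq:1t}: when $\Psi$ vanishes in a fixed neighborhood of $0$, one can factor out a $\delta D_t$ from $\{\Psi,\chi\}^W$, yielding $[\Psi(D_t),(\UU_\delta\chi)]\UU_\delta=\OO_{H^1\to L^2}(\delta^{3/2})$. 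Paired with $T_{\pm,2}=\OO_{L^2\to H^1}(\delta^{1/2})$ and the $\delta^{-1}$, this gives the $O(\delta)$ bound with no explicit tail computation. In effect, the factor $\delta D_t$ in \eqref{eq:1t} is a clean operator-theoretic encoding of the Parseval tail argument you sketch: the extra power of $\delta$ you want to extract from the high-frequency tail of an $H^1$ function is automatic once the symbol is written as $(\text{bounded})\cdot\tau$. Your plan is sound, but as written it stops short of the step that actually closes the estimate; the paper's route through \eqref{eq:1t} is the mechanism that does the work you defer.

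One additional caveat: your leading-order expansion $[\Tt_\delta,\chi_{\pm,\delta}]=-i(\nabla_\xi E_\delta)(D)\cdot\nabla\chi_{\pm,\delta}+\OO_{L^2[\zeta]}(\delta^2)$ needs to be justified in the semiclassical calculus adapted to the rescaling $\UU_\delta$; naively, $\Tt_\delta=E_\delta(D)$ is not of the form $a(\delta D)$, and it is precisely the conjugation identity \eqref{eq:1r} that reduces it to a genuine $\delta$-Weyl commutator. That point is implicit in your sketch but should be made explicit.
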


The basic idea is that because of Theorem \ref{thm:3}, $\big(\PP_{\delta,\pm}[\zeta]-\lambda\big)^{-1}$ localizes to frequencies near $\xi_\star$ modulo lower order terms; while $\Tt_\delta[\zeta]$ essentially localizes to frequencies away from $\xi_\star$.
Semiclassical analysis provides the natural tool to prove Lemma \ref{lem:1f}. We use the notations of \cite[\S4]{Z}. We say that a smooth function  $(t,\tau) \in \R^2 \mapsto a(t,\tau) \in \C$ (possibly depending on $\delta \in (0,\delta_\flat]$) belongs to the symbol class $S$  when:
\begin{equation}
\forall \az, \beta \in \N, \ \ \ \ \sup \left\{ \left|\p_x^\az \p_\xi^\beta a(t,\tau)\right| : (t,\tau,\delta) \in \R^2 \times (0,\delta_\flat] \right\} < \infty.
\end{equation}
See \cite[\S4.4]{Z}.
For $a \in S$, we denote by $a^W$ the Weyl quantization of $a$ with semiclassical parameter $\delta$ -- see \cite[(4.1.1)]{Z}. This is a bounded operator on $L^2$ -- see \cite[Theorem 4.23]{Z}. Moreover, if $b \in S$, then
\begin{equation}\label{eq:1o}
a^W b^W = (ab)^W + \dfrac{\delta}{2i}\{a,b\}^W + \OO_{L^2}\big(\delta^2\big),
\end{equation}
where $\{a,b\}$ is the Poisson bracket of $a$ and $b$.
The formula \eqref{eq:1o} follows from \cite[Theorem 4.18 and (4.4.15)]{Z} which writes $a^W b^W$ as a semiclassical operator with symbol
\begin{equation}\label{eq:1n}
ab + \dfrac{\delta}{2i}\{a,b\} + O_S\big(\delta^2\big);
\end{equation}
and \cite[Theorem 4.23]{Z}: the quantization of a symbol $O_S\big(\delta^2\big)$ is $\OO_{L^2[\zeta]}\big(\delta^2\big)$.

\begin{proof}[Proof of Lemma \ref{lem:1f}] 1.  Let $\chi, \Psi \in C^\infty(\R,\C)$ bounded together with their derivatives, with uniform bounds as $\delta$ goes to zero. We observe that 
\begin{equation}\label{eq:1r}
\big[\Psi(D_t), (\UU_\delta\chi) \big] = \UU_\delta \big[\Psi(\delta D_t),\chi\big] \UU_\delta^{-1}.
\end{equation}
Note that $\Psi(\delta D_t)$ is a semiclassical pseudodifferential operator with symbol $(t,\tau) \mapsto \Psi(\tau)$; and $\chi$ is also a semiclassical pseudodifferential operator with symbol $(t,\tau) \mapsto \chi(t)$ because of \cite[(4.1.6)]{Z}. We deduce from \eqref{eq:1o} that
\begin{equation}\label{eq:2p}
\big[\Psi(\delta D_t),\chi\big] =
\dfrac{\delta}{i} \big\{ \Psi, \chi \big\}^W + \OO_{L^2}\big(\delta^2\big).
\end{equation}
In particular, the operator \eqref{eq:1r} is $\OO_{L^2}(\delta)$.

2. Assume that in addition, $\Psi$ vanishes in a $\delta$-independent neighborhood of $0$. Then we can write \eqref{eq:2p} as
\begin{equation}
\big[\Psi(\delta D_t),\chi\big] =
\delta \cdot  \left(\dfrac{\big\{ \Psi(\tau), \chi(t) \big\}}{i\tau} \cdot \tau \right)^W + \OO_{L^2}\big(\delta^2\big).
\end{equation}
We use \eqref{eq:1o} to deduce that 
\begin{equation}
\big[\Psi(\delta D_t),\chi\big] =
\delta \cdot \left(\dfrac{\big\{ \Psi(\tau), \chi(t) \big\}}{i\tau} \right)^W \cdot \delta D_t + \OO_{L^2}\big(\delta^2\big).
\end{equation}
Thanks to \eqref{eq:1r}, if $\Psi$ vanishes in a $\delta$-independent neighborhood of $0$ then
\begin{equation}\label{eq:1t}
\big[\Psi(D_t),(\UU_\delta \chi)\big] \UU_\delta =
\delta \cdot \UU_\delta \left(\dfrac{\big\{ \Psi(\tau), \chi(t) \big\}}{i\tau} \right)^W \cdot \delta D_t + \OO_{L^2}\big(\delta^2\big) = \OO_{H^1 \rightarrow L^2}\big(\delta^{3/2}\big).
\end{equation}

3. For $f \in L^2[\zeta]$, set $F(s,t) = f(sv + tv')$. If $\GG$ is a bounded operator on $L^2(\R)$, we define 
\begin{equation}
\Gg f(sv+tv') = \big(\GG F(s,\cdot)\big)(t).
\end{equation}
Observe that $\|\Gg\|_{L^2[\zeta]} \leq \|\GG\|_{L^2}$:
\begin{equations}
|\Gg f|_{L^2[\zeta]}^2 = \int_0^1 \int_\R \big| \Gg f(sv+tv') \big|^2 ds dt = \int_0^1 \int_\R \big| \GG F(s,\cdot) \big)(t) \big|^2 ds dt
\\
= \int_0^1 \big| \GG F(s,\cdot) \big|^2_{L^2} ds \leq \|\GG\|_{L^2}^2 \cdot \int_0^1 \big| F(s,\cdot) \big|^2_{L^2} ds = \|\GG\|_{L^2}^2 \cdot |f|^2_{L^2[\zeta]}.
\end{equations}

We now observe that if $f \in L^2[\zeta]$, then
\begin{equation}\label{eq:8d}
\big(\big[\Tt_\delta[\zeta],\chi_{\pm,\delta}\big]f\big) (sv+tv') =\Big( \UU_\delta^{-1} \cdot \big[E_\delta\big(\zeta k + \delta k'D_t\big),\chi_{\pm}\big] \cdot \UU_\delta F(s,\cdot)\Big)(t).
\end{equation}
To prove \eqref{eq:8d}, we fix $f \in L^2[\zeta]$ which we expand in Fourier series w.r.t. $\Z v$:
\begin{equation}
f(x) = \sum_{m \in 2\pi\Z} e^{i(\zeta + m)\lr{k,x}} f_m(\lr{k',x}).
\end{equation}  
Since $e^{i(\zeta + m)\lr{k,x}} \in L^2_{(\zeta + m)k}$, and $E_\delta(\xi)$ depends periodically on $\xi$, $E_\delta(D_x) e^{i(\zeta + m)\lr{k,x}} = e^{i(\zeta + m)\lr{k,x}}  E_\delta(\zeta k + D_x)$. It follows that
\begin{equation}
(E_\delta(D_x) f\big)(sv+tv') = \sum_{m \in 2\pi \Z} e^{i(\zeta + m)s} \cdot \big(E_\delta(\zeta k + k' D_t) f_m\big)(t) = \big(E_\delta(\zeta k + k' D_t) F(s,\cdot)\big)(t).
\end{equation}
It suffices to recall that $\Tt_\delta$ is formally equal to $E_\delta(D_x)$ (though acting on $L^2[\zeta]$) to conclude the proof of \eqref{eq:8d}.

We now apply Step 1 to $\Psi(\tau) = E_\delta(\zeta k + \tau k')$ and $\chi = \chi_\pm = \frac{1 \pm \kappa}{2}$. We deduce that
\begin{equation}
\big\| \big[\Tt_\delta[\zeta],\chi_{\pm,\delta}\big] \big\|_{L^2[\zeta]} \leq \big\| \UU_\delta^{-1} \cdot \big[E_\delta\big(\zeta k + \delta k'D_t\big),\chi_{\pm}\big] \cdot \UU_\delta \big\|_{L^2} = O(\delta).
\end{equation}
Using Theorem \ref{thm:3}, we see that
\begin{equations}
\big[\Tt_\delta[\zeta],\chi_{\pm,\delta}\big] \cdot (\PP_{\delta,\pm}[\zeta]-\lambda)^{-1}
= \big[\Tt_\delta[\zeta],\chi_{\pm,\delta}\big] \cdot \SSS_\delta(\mu,z) + \OO_{L^2[\zeta]}\big(\delta^{2/3}\big).
\end{equations}

4. To conclude the proof, we show that $\big[\Tt_\delta[\zeta],\chi_{\pm,\delta}\big] \cdot \SSS_\delta(\mu,z) = \OO_{L^2[\zeta]}(\delta)$. Let $\Phi(x) = e^{i\mu\delta \lr{\ell,x}} [\phi_1(x), \phi_2(x)]^\top$. 
We write  $\big[\Tt_\delta[\zeta],\chi_{\pm,\delta}\big] \cdot \SSS_\delta(\mu,z) = \delta^{-1} T_{\pm,1}  \cdot T_{\pm,2}$, where 
\begin{equations}
T_{\pm,1} \de \big[\Tt_\delta[\zeta],\chi_{\pm,\delta}\big] \cdot \Phi  \RR^* \cdot  \UU_\delta \ : \ H^1 \rightarrow L^2[\zeta],
\\
T_{\pm,2} \de  \big( \Di_\pm(\mu) - z \big)^{-1} \UU_\delta^{-1} \cdot \RR e^{-i  \mu \delta \blr{\ell,x} } \ove{\matrice{\phi_1 \\ \phi_2}} \ : \ L^2[\zeta] \rightarrow H^1.
\end{equations}
We observe that $T_{\pm,2} = \OO_{L^2[\zeta] \rightarrow H^1}\big(\delta^{1/2}\big)$.

We recall that $\Tt_\delta[\zeta]$ is the operator formally equal to $E_\delta(D_x)$ but acting on $L^2[\zeta]$. Thanks to this identification, we have
\begin{equation}
\big[\Tt_\delta[\zeta],\chi_{\pm,\delta}\big] \cdot \Phi = \big[E_\delta(D_x),\chi_{\pm,\delta}\big]  \cdot \Phi
= \Phi  \cdot \big[E_\delta(D_x - \xi_\star - \mu \delta k),\chi_{\pm,\delta}\big].
\end{equation}
Above, we used that $E_\delta$ is periodic and that $\Phi \in L^2_{\xi_\star-\mu \delta k}$. We deduce that
\begin{equations}
T_{\pm,1} =  \Phi  \cdot \big[E_\delta(D_x - \xi_\star - \mu \delta k),\chi_{\pm,\delta}\big] \RR^* \UU_\delta
=  \Phi \RR^*  \cdot \big[E_\delta(k'D_t - \xi_\star - \mu \delta k),(\UU_\delta \chi_{\pm})\big]  \UU_\delta.
\end{equations}
We now apply \eqref{eq:1t} with $\chi = \chi_\pm$ and $\Psi(\tau) = E_\delta(\tau k' - \xi_\star - \mu \delta k)$; we observe that $\Psi$ vanishes in a $\delta$-independent neighborhood of $0$ because $E_\delta(\xi)$ vanishes when $\xi$ is near $\xi_\star$. We deduce that  $T_{\pm,1} = \OO_{H^1 \rightarrow L^2[\zeta]}\big(\delta^{3/2}\big)$. Since $\big[\Tt_\delta[\zeta],\chi_{\pm,\delta}\big] \cdot \SSS_\delta(\mu,z) = \delta^{-1} T_{\pm,1}  \cdot T_{\pm,2}$, we deduce that $\big[\Tt_\delta[\zeta],\chi_{\pm,\delta}\big] \cdot \SSS_\delta(\mu,z) = \OO_{L^2[\zeta]}\big(\delta^2\big)$. The proof of the lemma is complete.\end{proof}

We conclude from Lemma \ref{lem:1f} and the discussion preceding it that
\begin{equations}
\big( \PP_\delta[\zeta]-\lambda \big) \cdot \QQ_\delta(\zeta,\lambda) =  \Id + \KK_\delta(\zeta,\lambda)+ \OO_{L^2[\zeta]}\big(\delta^{2/3}\big),
\\ 
\KK_\delta(\zeta,\lambda) \de \sum_{\pm} \left(\big[D_x^2,\chi_{\pm,\delta}\big] \mp \dfrac{1 - \kappa_\delta^2}{2}  \cdot \delta\Ww \right) \cdot \big(\PP_{\delta,\pm}[\zeta]-\lambda\big)^{-1}.
\end{equations}
The operator $\KK_\delta(\zeta,\lambda)$ and $\QQ_\delta(\zeta,\lambda)$
satisfy the same expansions as $\KKK_\delta(\zeta,\lambda)$ and $\QQQ_\delta(\zeta,\lambda)$ in \cite[\S6.1]{Dr0}, because Theorem \ref{thm:3} provides the same resolvent estimates as \cite[Theorem 3]{Dr0}. Therefore, the proof of \cite[Theorem 2]{Dr0} applies without further changes. It yields:

\begin{theorem} Assume that \eqref{eq:2a} holds and that $\te_\star \neq 0$, $\zeta_\star \notin \pi \Z$; 
fix $\tmu > 0$ and $\epsilon > 0$. Let $\Sigma(\mu)$ denote the $L^2$-spectrum of $\Di(\mu)$. There exists $\delta_0 > 0$ such that if
\begin{equations}
\mu \in (-\tmu,\tmu), \ \ \delta \in (0,\delta_0), \  \ z \in \Dd\Big(0,\sqrt{\te_F^2 + \mu^2\cdot \nu_F^2 | \ell|^2} -\epsilon \Big), \ \ 
 \dist\big( \Sigma(\mu), z \big) \geq \epsilon, 
\\
 \zeta = \zeta_\star +  \delta\mu, \ \ \lambda = E_\star + \delta z 
\end{equations}
then $\PP_\delta[\zeta] - \lambda$ is invertible and its resolvent $\big( \PP_\delta[\zeta] - \lambda \big)^{-1}$ equals
\begin{equation}
 \dfrac{1}{\delta} \cdot  \matrice{ \phi_1 \\ \phi_2}^\top  e^{-i  \mu \delta \blr{\ell,x} } \cdot \RR^*   \UU_\delta \cdot \big( \Di(\mu)-z \big)^{-1}  \cdot\UU_\delta^{-1}  \RR \cdot e^{i  \mu \delta \blr{\ell,x} } \ove{\matrice{\phi_1 \\ \phi_2}} + \OO_{L^2[\zeta]}\left(\delta^{-1/3}\right).
\end{equation}
\end{theorem}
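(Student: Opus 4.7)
\smallsection{Proof plan}

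The plan is to exploit the parametrix identity already derived in the paragraph preceding the theorem,
\begin{equation}
\big( \PP_\delta[\zeta]-\lambda \big) \cdot \QQ_\delta(\zeta,\lambda) = \Id + \KK_\delta(\zeta,\lambda) + \OO_{L^2[\zeta]}\big(\delta^{2/3}\big),
\end{equation}
and invert $\Id+\KK_\delta(\zeta,\lambda)$ modulo a small error using Theorem \ref{thm:3}. The key point is that $\KK_\delta(\zeta,\lambda)$ is built out of $\big[D_x^2,\chi_{\pm,\delta}\big]$ and $\tfrac{1-\kappa_\delta^2}{2} \cdot \delta \Ww$, both of which localize spatially to $\{|\lr{k',x}|\lesssim \delta^{-1}L\}$ and carry an extra factor of $\delta$ through $\p\chi_{\pm,\delta}$ or through the coupling constant. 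Composing with $(\PP_{\delta,\pm}[\zeta]-\lambda)^{-1}$ and using the bulk resolvent expansion $\SSS_{\pm\delta}(\mu,z)+\OO(\delta^{-1/3})$ from Theorem \ref{thm:3}, together with the $\SSS^D_{\pm\delta}(\mu,z)$ estimate for the extra $k'\cdot D_x$ derivative arising in $[D_x^2,\chi_{\pm,\delta}]$, one identifies $\KK_\delta(\zeta,\lambda)$, to leading order, with an integral operator of the same structure as $\SSS_{\pm\delta}(\mu,z)$ but with $(\Di_\pm(\mu)-z)^{-1}$ multiplied on the left by a compactly supported, order-one operator on $L^2(\R,\C^2)$.

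The second step is to recognize that, once conjugated by $\RR^* \UU_\delta$ and stripped of the slowly oscillating factor $e^{i\mu\delta\lr{\ell,x}}\Phi$, the full operator $\Id+\KK_\delta(\zeta,\lambda)$ becomes, to leading order, $\big(\Di(\mu)-z\big)\big(\Di_\pm(\mu)-z\big)^{-1}$ on each half-line $\pm t>0$, glued together by the domain wall $\kappa$ in place of the piecewise constant limits $\pm 1$ in $\Di_\pm(\mu)$. This is exactly the mechanism used in \cite[\S6]{Dr0}: the commutators $[D_x^2,\chi_{\pm,\delta}]$ and the quintessential $(1-\kappa_\delta^2)\delta\Ww$ contribution combine into the transition term that converts $\Di_\pm(\mu)$ to $\Di(\mu)$. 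Under the spectral assumption $\dist(\Sigma(\mu),z)\geq\epsilon$, the operator $(\Di(\mu)-z)^{-1}$ exists as a bounded operator on $L^2(\R,\C^2)$, so $\Id+\KK_\delta(\zeta,\lambda)$ is invertible for $\delta$ small enough by a Neumann series argument, and its inverse is, up to $\OO_{L^2[\zeta]}(\delta^{1/3})$ errors, the obvious replacement $(\Di(\mu)-z)^{-1}(\Di_\pm(\mu)-z)$ reassembled into a single operator on $L^2[\zeta]$.

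Multiplying $\QQ_\delta(\zeta,\lambda)$ on the right by this approximate inverse then yields an actual right inverse to $\PP_\delta[\zeta]-\lambda$ modulo $\OO_{L^2[\zeta]}(\delta^{-1/3})$. Inserting the formula for $\SSS_{\pm\delta}(\mu,z)$ from Theorem \ref{thm:3}, observing that $\chi_{+,\delta}+\chi_{-,\delta}=1$, and noting that $(\Di_\pm(\mu)-z)^{-1}$ cancels between $\SSS_{\pm\delta}(\mu,z)$ and the approximate inverse of $\Id+\KK_\delta(\zeta,\lambda)$, one obtains the stated formula with $(\Di(\mu)-z)^{-1}$ replacing $(\Di_\pm(\mu)-z)^{-1}$ and the prefactor $1/\delta$ coming from $\UU_\delta$. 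A parallel left parametrix argument upgrades the right inverse to a two-sided inverse, giving the claimed invertibility of $\PP_\delta[\zeta]-\lambda$.

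The main obstacle is the last step. The commutators $[D_x^2,\chi_{\pm,\delta}]$ contain a $k'\cdot D_x$ factor which is a priori unbounded, so one must absolutely pair them with the sharp estimate on $(k'\cdot D_x)(\PP_{\delta,\pm}[\zeta]-\lambda)^{-1}$ provided by $\SSS^D_{\pm\delta}(\mu,z)$ in Theorem \ref{thm:3}, and verify that the slowly oscillating factors $e^{\pm i\mu\delta\lr{\ell,x}}\Phi$ pass through the commutators with negligible remainder. The semiclassical commutator lemma (Lemma \ref{lem:1f}) already guarantees this for the $\Tt_\delta[\zeta]$-contribution; the analogous clean cancellation for the $D_x^2$-commutator is exactly what makes the reduction to the $\Di(\mu)$-resolvent work, and this is where the assumption $\zeta_\star\notin\pi\Z$ is again used (through Theorem \ref{thm:3}) to keep the away-from-Dirac regions harmless. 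Once these technicalities are controlled verbatim as in \cite[\S6.1]{Dr0}, the conclusion follows.
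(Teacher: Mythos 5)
Your proposal follows the same route as the paper: once Theorem \ref{thm:3} and Lemma \ref{lem:1f} deliver the same bulk-resolvent and commutator estimates as \cite[Theorem 3]{Dr0}, the right-parametrix $\QQ_\delta(\zeta,\lambda)$ and its error $\KK_\delta(\zeta,\lambda)$ obey the same expansions as in \cite[\S6.1]{Dr0}, so the inversion via the effective Dirac resolvent $(\Di(\mu)-z)^{-1}$ goes through verbatim. The one loose phrasing is describing $(\Id+\KK_\delta)^{-1}$ as ``$(\Di(\mu)-z)^{-1}(\Di_\pm(\mu)-z)$ reassembled'' — what one actually inverts is the lift of $\Id+\mathfrak{k}(\mu,z)$, where $(\Di(\mu)-z)\sum_\pm\chi_\pm(\Di_\pm(\mu)-z)^{-1}=\Id+\mathfrak{k}(\mu,z)$, which is invertible with uniformly bounded inverse because $\dist(\Sigma(\mu),z)\geq\epsilon$ — but the rest of your paragraph makes clear that this is the intended mechanism.
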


The family $\mu \mapsto \Di(\mu)$ has spectral flow equal to $-\sgn(\te_\star)$ as $\mu$ runs through $\R$ -- see \cite[\S3.2]{Dr0}. Since there are two Dirac points, we recover a spectral flow of $\PP_\delta-E_\star$ equal to $-2 \cdot \sgn(\te_\star)$ for small $\delta$ -- see the proof of \cite[Corollary 4]{Dr0}. The identity \eqref{eq:2s} completes the proof of Theorem \ref{thm:2}.

\end{document}